\font\teneufm=eufm10
\font\seveneufm=eufm7
\font\fiveeufm=eufm5
\theoremstyle{definition}
\newtheorem{definition}{Definition}[section]
\newtheorem{remark}[definition]{Remark}
\newtheorem{example}[definition]{Example}
\theoremstyle{plain}
\newtheorem{lemma}[definition]{Lemma}
\newtheorem{prp}[definition]{Proposition}
\newtheorem{corollary}[definition]{Corollary}
\newtheorem{theorem}[definition]{Theorem}
\begin{document}

\title{The symmetric operation in a free Novikov algebra}

\author{Askar Dzhumadil'daev}

\address{Institute of Mathematics and Mathematical Modeling, Almaty, 050010, Kazakhstan}

\email{askar56@gmail.com}

\author{Nurlan Ismailov}

\address{Astana IT University,
Mangilik El avenue, 55/11, Business center EXPO, block C1,
Astana, 010000, Kazakhstan and Institute of Mathematics and Mathematical Modeling, Almaty, 050010, Kazakhstan}

\email{nurlan.ismail@gmail.com}

\subjclass[2020]{Primary 13N15; Secondary 17A30, 17D25, 49K15}

\keywords{differential polynomial algebra, Novikov algebra, null Lagrangian.}

\maketitle

\begin{abstract}
We study the symmetrization of the Novikov product. Using the embedding of a free Novikov algebra into a commutative differential algebra over a field of characteristic zero, we first obtain a basis for the subalgebra generated by the free generators with respect to the symmetrized product $a\circ b=ab+ba$. Next, using Euler operators (variational derivatives), we give a criterion for recognizing symmetric elements and show that these elements coincide with null Lagrangians in the corresponding differential realization. We then construct a non-special homomorphic image and show that the class of algebras embeddable into symmetrizations of Novikov algebras does not form a variety. Finally, we determine the module structure of the multilinear components over the symmetric group.
\end{abstract}

\section{\label{1}\ Introduction.}

A vector space $A$ over a field $\mathbb{F}$ equipped with the bilinear product $(x,y)\mapsto xy$ is called a {\it (right) Novikov algebra} if it satisfies the following two identities:
\begin{equation}\label{rsym=0}
(ab)c-a(bc)=(ac)b-a(cb),
\end{equation}
\begin{equation}\label{leftcom=0}
a(bc)=b(ac)
\end{equation}
for all $a,b,c\in A$.
Novikov algebras first appeared in the study of Hamiltonian operators in the formal calculus of variations by Gel'fand and Dorfman \cite{GelʹfandDorfman1979}, and later in the classification of linear Poisson brackets of hydrodynamic type by Balinskii and Novikov \cite{BalinskiiNovikov1985}.
Identity~(\ref{rsym=0}) defines the variety of right-symmetric (pre-Lie) algebras.

To provide examples of these algebras, consider the polynomial algebra
$P_n=\mathbb{F}[x_1,\ldots,x_n]$ over $\mathbb{F}$ in the variables $x_1,\ldots,x_n$ and the Witt algebra of index $n$,
that is, the Lie algebra of all derivations of $P_n$:
\[
W_n=\{\,u\partial_i \mid u\in P_n,\ 1\le i\le n\,\},
\]
where $\partial_i=\partial/\partial x_i$.
Define a product on $W_n$ by
\[
u\partial_i\cdot v\partial_j=(v\partial_j(u))\partial_i.
\]
Then $(W_n,\cdot)$ satisfies identity~(\ref{rsym=0}) and hence is a right-symmetric algebra (a right-symmetric Witt algebra).
Moreover, $(W_n,\cdot)$ is a Novikov algebra (a Novikov--Witt algebra) with respect to this product only in the case $n=1$.

The polynomial identities of right-symmetric Witt algebras were studied in
\cite{Dzhumadil’daev2000, Dzhumadil'daev-Lofwall2002, KozybaevUmirbaev2016}.
In particular, Kozybaev and Umirbaev \cite{KozybaevUmirbaev2016} proved that the family of right-symmetric algebras $W_n$,
$n\ge 1$, generates the variety of all right-symmetric algebras.
In \cite{Makar-LimanovUmirbaev2011} it was proved that, over a field of characteristic zero,
the variety of Novikov algebras is generated by the Novikov--Witt algebra $W_1$.

There is a general construction of Novikov algebras, the so-called {\it Gel'fand--Dorfman} construction, using commutative associative differential algebras \cite{GelʹfandDorfman1979}: if $A$ is a commutative associative algebra equipped with a derivation $D$, then the multiplication
\[
a\cdot b = D(a)b
\]
turns $A$ into a Novikov algebra.

In \cite{Dzhumadil'daev-Lofwall2002}, Dzhumadil'daev and L\"ofwall proved that every free Novikov algebra over a field of characteristic zero embeds into the free commutative associative differential algebra on the same generating set, and they also described a basis of the free Novikov algebra in terms of differential monomials. In \cite{DIU2023} using this embedding, Dotsenko, Ismailov, and Umirbaev proved that every set of identities of Novikov algebras over a field of characteristic zero follows from a finite set of identities.

Novikov algebras are {\it Lie-admissible}, that is, for every Novikov algebra $A$ the commutator
\[
[a,b]=a\cdot b-b\cdot a
\]
defines a Lie algebra structure on the underlying vector space of $A$.
It is clear that the Novikov--Witt algebra $(W_1,\cdot)$, equipped with the commutator bracket, is the Witt algebra $(W_1,[\,,\,])$.

Moreover, the Witt algebra $W_1$ satisfies the following identity of degree $5$:
\[
\sum_{\sigma\in S_4}(-1)^{\sigma}\,[x_{\sigma(1)},[x_{\sigma(2)},[x_{\sigma(3)},[x_{\sigma(4)},x_5]]]]=0,
\]
where $S_4$ is the symmetric group on $\{1,2,3,4\}$; see \cite{Razmyslov1986, Kirillov1989, Kirillov1991}.

In \cite{Molev1989}, Molev described the $S_n$-module structure of the space of multilinear Lie elements of degree $n$ in a free Novikov algebra. As corollaries, he obtained formulas for the multiplicities of irreducible $S_n$-modules and for its codimension. However, several long-standing open problems concerning the Witt algebra $W_1$ remain, including:
(i) finding a basis of the polynomial identities of $W_1$; (ii) constructing a linear basis of the space $W_1$; (iii) giving a criterion for determining which Lie polynomials belong to the Lie subspace of a free Novikov algebra.

In addition to the commutator on the Novikov--Witt algebra $(W_1,\cdot)$, one can also consider the anticommutator, i.e., the symmetrization of the Novikov product:
\[
a \circ b = a\cdot b + b\cdot a \qquad (a,b\in W_1).
\]
In this paper, we study this symmetrized product over a field of characteristic zero.

Let $Nov\langle X\rangle$ be the free Novikov algebra generated by a set $X$, and let $S\langle X\rangle$ be the smallest subspace of $Nov\langle X\rangle$ that contains $X$ and is closed under the symmetrized product. We say that $f\in Nov\langle X\rangle$ is {\it symmetric} if $f\in S\langle X\rangle$.

In \cite{Dzhumadil'daev2002, Dzhumadil'daev2005}, the first author of the present paper showed that, with respect to this symmetrized product, every Novikov algebra satisfies the following two identities:
\[
(a\circ b)\circ(c\circ d)-(a\circ d)\circ(c\circ b)
=(a,b,c)_{\circ}\circ d-(a,d,c)_{\circ}\circ b,
\]
and
\[
\begin{aligned}
&(((a\circ a)\circ a)\circ b)\circ b+(((a\circ b)\circ b)\circ a)\circ a
+2(((a\circ a)\circ b)\circ b)\circ a \\
&\quad +2(((a\circ b)\circ a)\circ a)\circ b
-3(((a\circ a)\circ b)\circ a)\circ b
-3(((a\circ b)\circ a)\circ b)\circ a=0.
\end{aligned}
\]
Here $(a,b,c)_{\circ}=a\circ(b\circ c)-(a\circ b)\circ c$ is the associator of~$\circ$.
Moreover, every identity of degree at most seven follows from these two identities.
The first identity is called the {\it Tortken identity}, and algebras satisfying it are called {\it Tortken algebras}.

Using the embedding theorem for free Novikov algebras into free commutative associative differential algebras \cite{Dzhumadil'daev-Lofwall2002}, we construct a basis of $S\langle X\rangle$ in terms of differential polynomials. As a corollary of this result, we obtain that the symmetrized product of any two Novikov polynomials is again a symmetric element. 

The characterization of symmetric elements in the free Novikov algebra $Nov\langle X\rangle$ is closely related to the {\it Euler operators} (or {\it variational derivatives}). In the calculus of variations, the Euler operator $E$ plays a fundamental role \cite{Olver1993}.  If $$I=\int L(x,y,y',y'',\ldots,)dx$$ is a variational problem, then the Euler (Euler--Lagrange) equation $E(L)=0$ forms a necessary condition for an extremal (maxima or minima) of functional $I$.  The integrand $L(x,y,y',y'',\ldots,)$ is called {\it Lagrangian} of the variatinal problem $I$.  A Lagrangian $L$ is said to be {\it null} if $E(L)=0$. We show that the space of null Lagrangians in a free Novikov algebra coincides with the space of symmetric elements. The proof of this criterion crucially uses the generalized Gel'fand--Dikii construction introduced in \cite{OlverShakiban1978, Shakiban1981}. 

Then, using Cohn's criterion \cite[Theorem 2.2]{Cohn1954}, we show that the class of algebras embeddable into Novikov algebras with respect to the symmetrized product does not form a variety.

The $S_n$-module structure of the space of multilinear elements in a free Novikov algebra was completely described in \cite{Dzhumadil'daevIsmailov2014} by the authors of the present paper. Using these methods, we prove that the subspace of multilinear elements in $S\langle X\rangle$ as $S_n$-module is isomorphic to a direct sum of modules induced from the trivial modules of certain Young subgroups, and that the multiplicities of Specht modules in this decomposition are given by Kostka numbers.

This paper is organized as follows. In Section~2, we fix the notation and recall the basis of the algebra of differential polynomials with one derivation, as well as the description of a basis of free Novikov algebras in terms of differential monomials. In Section~3, we construct a basis for the subalgebra generated by the free generators with respect to the symmetrized Novikov product. Section~4 recalls the generalized Gel'fand--Dikii transform and establishes a connection between null Lagrangians and symmetric elements in the free Novikov algebra. In Section~5, using Cohn's criterion, we show that the class of algebras embeddable into symmetrizations of Novikov algebras does not form a variety. Finally, Section~6 describes the $S_n$-module structure of the space of null Lagrangians in the free Novikov algebra.

\section{\label{2}\ A differential polynomial algebra and free Novikov algebra}

Let $\mathbb{F}$ be a field of characteristic zero, and let $A$ be an $\mathbb{F}$-algebra. 
A linear map $D\!:A\to A$ is called a \emph{derivation} of $A$ if it satisfies the Leibniz rule
$$D(ab)=D(a)\,b+a\,D(b)$$
for all $a,b\in A$.

Let $X$ be the set of variables $x_1,x_2,\ldots,x_n$, and let $X^{D}$ denote the set of variables of the form $D^{k}(x_i)$ with $1\le i\le n$ and $k\ge 0$. For brevity, we write $u'$, $u''$, and $u^{(k)}$ for $D(u)$, $D^{2}(u)$, and $D^{k}(u)$, respectively. Denote by $\mathbb{F}\{X\}$ the differential (polynomial) algebra over $\mathbb{F}$ generated by $X^{D}$ with a single derivation $D$. This is the \emph{ordinary differential algebra}; see, for example, \cite{DusengaliyevaUmirbaev2018,Kolchin1973}.

Now we recall a monomial basis of the algebra $\mathbb{F}\{X\}$. 
Let $I=(i_1,\ldots,i_{m})$ be a (finite) multi-index with $i_r\in\mathbb{Z}_{\ge 0}$. 
For $x\in X$, define the differential monomial
$$
x^{I}=x^{(i_1)}\cdots x^{(i_{m})}.
$$
We call $m$ the \emph{(standard) degree} of $x^I$ and write $\deg(x^I)=m$, and we call
$$
d(x^I)=i_1+\cdots+i_m
$$
the \emph{differential degree} of $x^I$.

Consider monomials of the form
$$
w=x^{I_1}_1\cdots x^{I_n}_n,
$$
where each $I_k$ is a multi-index as above. The set
$$
\mathcal{M}=\{\,x^{I_1}_1\cdots x^{I_n}_n \mid I_1,\ldots,I_n \text{ multi-indices}\,\}
$$
forms an $\mathbb{F}$-basis of $\mathbb{F}\{X\}$.

For $w=x^{I_1}_1\cdots x^{I_n}_n$, define
$$
\deg_{x_i}(w)=\deg\!\bigl(x_i^{I_i}\bigr)=|I_i|,
$$
i.e., the number of factors $x_i^{(k)}$ occurring in the $i$-th block (counted with multiplicity).

We extend the two degrees additively:
$$
\deg(w)=\deg(x^{I_1}_1)+\cdots+\deg(x^{I_n}_n),\qquad
d(w)=d(x^{I_1}_1)+\cdots+d(x^{I_n}_n).
$$
In particular,
$$
\deg(w)=\sum_{i=1}^n \deg_{x_i}(w).
$$

Every $f\in\mathbb{F}\{X\}$ can be written as $f=\sum_{\alpha} c_{\alpha} w_{\alpha}$ with $c_{\alpha}\in\mathbb{F}$ and $w_{\alpha}\in\mathcal{M}$. 
We say that $f$ is \emph{homogeneous} if there exists $(\lambda_1,\ldots,\lambda_n)\in\mathbb{Z}_{\ge0}^n$ such that
$$
\deg_{x_i}(w_{\alpha})=\lambda_i\quad\text{for every }i=1,\ldots,n\text{ and all }\alpha.
$$
In this case we set $\deg_{x_i}(f)=\lambda_i$ and call $(\lambda_1,\ldots,\lambda_n)$ the \emph{multidegree} of $f$.

As mentioned earlier, a commutative associative $\mathbb{F}$-algebra $A$ with a derivation $D$ becomes a Novikov algebra $(A,\cdot)$ by setting
$a\cdot b:=D(a)\,b$ for all $a,b\in A$.
Therefore, $\mathbb{F}\{X\}$ is a Novikov algebra under this product.  
Let $Nov\langle X\rangle$ denote the subalgebra of $(\mathbb{F}\{X\},\cdot)$ generated by $X$. Then $Nov\langle X\rangle$ is the free Novikov algebra on $X$ without a unit \cite{Dzhumadil'daev-Lofwall2002}.  
There are two known bases for free Novikov algebras: one described via Young diagrams \cite{Dzhumadil'daev2011}, and one given by differential monomials \cite{Dzhumadil'daev-Lofwall2002,DusengaliyevaUmirbaev2018}. In this paper, we work with  the second basis.

\begin{theorem}\label{Th1}
The set
$$
\mathcal{N}=\{\,u\in\mathcal{M}\mid \deg(u)-d(u)=1\,\}
$$
is a basis of the free Novikov algebra $Nov\langle X\rangle$.
\end{theorem}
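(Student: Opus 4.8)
The plan is to exploit the weight function $w(u)=\deg(u)-d(u)$ on monomials and to split the statement into an easy upper bound $Nov\langle X\rangle\subseteq\operatorname{span}(\mathcal N)$ and a harder lower bound $\operatorname{span}(\mathcal N)\subseteq Nov\langle X\rangle$; linear independence of $\mathcal N$ then comes for free, since $\mathcal N\subseteq\mathcal M$ and $\mathcal M$ is a basis of $\mathbb F\{X\}$.

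First I would record how the two degrees behave under the basic operations. Applying $D$ to a monomial preserves $\deg$ and raises $d$ by one, so $w(D(u))=w(u)-1$, while the commutative product is additive in both $\deg$ and $d$, hence $w(uv)=w(u)+w(v)$. Consequently the subspaces $V_k=\operatorname{span}\{u\in\mathcal M:w(u)=k\}$ give a grading of $\mathbb F\{X\}$ with $D(V_k)\subseteq V_{k-1}$, and for the Novikov product $a\cdot b=D(a)b$ this yields $w(a\cdot b)=w(a)+w(b)-1$. Since every generator $x_i$ lies in $V_1$ (as $\deg(x_i)=1$ and $d(x_i)=0$), an immediate induction on the number of factors shows that every product of generators stays in $V_1$; thus $Nov\langle X\rangle\subseteq V_1=\operatorname{span}(\mathcal N)$, the easy inclusion.

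The substance of the proof is the reverse inclusion, for which I would use a leading-term (triangularity) argument, working inside one multidegree at a time (a finite-dimensional component). Fix a total order on the monomials of $\mathcal N$ in that multidegree, refining comparison of the sorted sequence of derivative orders so that larger individual derivatives are heavier, with a tie-break on the variable labels. The goal is to attach to each $u\in\mathcal N$ an explicit bracketed product $P_u$ of generators whose expansion in $\mathcal M$ has the form $P_u=c_u\,u+\sum_{v<u}c_v\,v$ with $c_u\neq0$. The building operations are transparent in the differential model: left multiplication $L_{x_i}(a)=x_i'\,a$ simply appends a factor $x_i'$, whereas right multiplication $R_{x_i}(a)=D(a)\,x_i$ distributes one derivative over the existing factors by the Leibniz rule and appends a factor $x_i$; a high derivative order on some factor is manufactured by iterating right multiplications, exactly as $(x_i\cdot x_j)\cdot x_k-x_i\cdot(x_j\cdot x_k)=x_i''x_jx_k$ produces a second derivative. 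Once the family $\{P_u\}$ is built, the matrix expressing it in terms of $\mathcal N$ is triangular with nonzero diagonal, hence invertible, so every $u\in\mathcal N$ is an $\mathbb F$-combination of the $P_u$ and therefore lies in $Nov\langle X\rangle$; together with the easy inclusion and the linear independence of $\mathcal N$ this shows $\mathcal N$ is a basis.

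I expect the main obstacle to be the construction of $P_u$ together with the verification that its leading term really is $u$. Because $R_{x_i}$ acts through the Leibniz rule, raising a single derivative order forces a sum over all factors, and one must check that, under the chosen order, the intended top term dominates every spurious term and that no two distinct monomials accidentally share a leading term. I would handle this by a secondary induction, building $P_u$ from the product realizing a smaller monomial (peeling off either a factor $x_i'$ via $L_{x_i}$, or the factor of highest derivative order via a nested right multiplication) and tracking leading terms through each step; the delicate point is the bookkeeping of which factor gets bumped. As a cross-check, and as an alternative route to the lower bound, one can instead compare dimensions multidegree by multidegree against a known Hilbert series of the free Novikov algebra, which with the easy inclusion and the linear independence of $\mathcal N$ again forces equality.
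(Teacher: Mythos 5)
First, for calibration: the paper does not actually prove this theorem --- it is recalled as a known result from \cite{Dzhumadil'daev-Lofwall2002,DusengaliyevaUmirbaev2018} --- so your attempt must stand entirely on its own. The parts you complete are correct: the weight $w=\deg-d$ is additive under the commutative product and drops by one under $D$, hence $w(a\cdot b)=w(a)+w(b)-1$, so the subalgebra generated by $X$ lies in $\operatorname{span}(\mathcal N)$; and linear independence of $\mathcal N$ is indeed free since $\mathcal N\subseteq\mathcal M$ and $Nov\langle X\rangle\subseteq\mathbb F\{X\}$. But the reverse inclusion $\operatorname{span}(\mathcal N)\subseteq Nov\langle X\rangle$ is the entire content of the theorem, and there you give only a plan: the products $P_u$ are never constructed and the triangularity is never verified --- you yourself label this ``the main obstacle'' and defer exactly the bookkeeping on which everything hinges. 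That is a genuine gap, not a routine omission, because the order you propose cannot support the argument as stated. The Leibniz rule produces distinct monomials with the \emph{same} sorted derivative sequence: for instance
$$(x_1'x_2'x_3)\cdot x_4=x_1''x_2'x_3x_4+x_1'x_2''x_3x_4+x_1'x_2'x_3'x_4,$$
where the first two terms both have sorted sequence $(2,1,0,0)$. Whichever of them your label tie-break declares dominant, you then owe a \emph{different} product of generators whose leading term is the other monomial, and the proposal offers no mechanism for producing it; iterating right multiplications only compounds the problem. The fallback via a ``known Hilbert series'' is also not self-contained: to transfer dimensions of the abstract free Novikov algebra to the subalgebra of $\mathbb F\{X\}$ you need the Dzhumadil'daev--L\"ofwall embedding theorem, which in the literature is established together with this very basis, so that route is essentially circular here.

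The missing idea is one you brush past: your operation $L_{x_i}(a)=x_i'a$ is \emph{exact} (no spurious terms), and the whole proof can be organized around that fact, with no ordering at all. Induct on $\deg(u)=m$. If $u$ contains a first-order factor, write $u=x_i'v$; then $v$ has weight one and degree $m-1$, so $u=x_i\cdot v\in Nov\langle X\rangle$. Otherwise let $r\ge 2$ be the minimal nonzero derivative order of $u$, say on the factor $x_j^{(r)}$; since the nonzero orders sum to $m-1$ and each is at least $r$, a pigeonhole count shows $u$ has at least $r-1$ factors of order zero. Set $a=x_j^{(r-1)}z_1\cdots z_{r-1}$ (the order-$r$ factor reduced once, times $r-1$ zero-order factors of $u$) and let $b$ be the product of the remaining factors of $u$; both are weight-one monomials of degree $<m$, hence lie in $Nov\langle X\rangle$ by induction, and
$$a\cdot b=u+\sum_{i=1}^{r-1}x_j^{(r-1)}z_1\cdots z_i'\cdots z_{r-1}\,b,$$
where every correction term contains a first-order factor and is therefore in $Nov\langle X\rangle$ by the first case. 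Thus $u\in Nov\langle X\rangle$. With this replacement for your triangularity step, the rest of your skeleton (easy inclusion plus linear independence) does yield the theorem.
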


Let $F_m$ denote the homogeneous component of degree $m$ of $Nov\langle X\rangle$, and let $F^{\mathrm{multi}}_n$ be its multilinear component in $x_1,\ldots,x_n$. 
It is shown in \cite{Dzhumadil'daev2011} that
$$
\dim F^{\mathrm{multi}}_n=\binom{2n-2}{\,n-1\,}.
$$

\section{\label{2}\ A basis for the space of symmetric elements}

Before presenting a basis for the space of symmetric elements, we record the following identity, valid in any differential algebra over a field of characteristic $\neq 2$.

\begin{lemma}\label{lemma1} Every differential algebra over a field of characteristic different from two satisfies the following identity
$$2(a'bc)'=((ab)'c)'+((ac)'b)'-((bc)'a)'.$$
\end{lemma}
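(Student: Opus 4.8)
The plan is to prove this by direct expansion using the Leibniz rule, exploiting commutativity of the differential (polynomial) algebra as is standard in this setting, and then matching the two sides monomial by monomial. The key structural observation is that every term appearing in the statement has standard degree $3$ and differential degree $2$, so after full expansion each side is a linear combination of the six commutative monomials obtained by distributing two derivatives among $a,b,c$; I will organize the entire computation around these six ``derivative profiles'' $a''bc,\ ab''c,\ abc'',\ a'b'c,\ a'bc',\ ab'c'$.

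First I would expand the left-hand side: a single application of Leibniz gives
$$
2(a'bc)' = 2\bigl(a''bc + a'b'c + a'bc'\bigr),
$$
so the left-hand side involves only the three profiles in which $a$ carries at least one derivative. Next I would expand each term on the right-hand side by applying Leibniz twice. Starting from $(ab)'=a'b+ab'$ one obtains
$$
((ab)'c)' = a''bc + 2a'b'c + a'bc' + ab''c + ab'c',
$$
and the expansions of $((ac)'b)'$ and $((bc)'a)'$ follow by the evident symmetry in the roles played by $b$, $c$, and $a$. Using commutativity to identify reorderings such as $a''cb$ with $a''bc$, I would substitute these three expansions into the combination $((ab)'c)'+((ac)'b)'-((bc)'a)'$ and collect the coefficient of each profile.

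The crux of the argument — and the only place where the precise signs $+,+,-$ matter — is the cancellation: the three profiles $ab''c$, $abc''$, and $ab'c'$ in which $a$ carries no derivative must receive total coefficient $0$, which happens precisely because the contributions of $((ab)'c)'$ and $((ac)'b)'$ to these profiles are exactly removed by $-((bc)'a)'$. What survives is $2a''bc + 2a'b'c + 2a'bc'$, coinciding with the expanded left-hand side. I expect no genuine obstacle: the whole proof is a finite, mechanical verification, and the only care needed is accurate bookkeeping of the six profiles and their coefficients. I would also remark that the displayed equality in fact holds as an identity of integer-coefficient expressions in any characteristic (both sides vanish when $2=0$); the hypothesis of characteristic different from two is what makes the factor $2$ invertible, so that the identity can subsequently be used to express $(a'bc)'$ in terms of the right-hand side.
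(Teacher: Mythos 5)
Your proof is correct. The paper states this lemma without proof---it is exactly the routine Leibniz-rule verification you carry out, and your bookkeeping checks out: the profiles $ab''c$, $abc''$, $ab'c'$ cancel under the sign pattern $+,+,-$, leaving $2a''bc+2a'b'c+2a'bc'$ on both sides. The only point worth emphasizing is one you already flag: commutativity is genuinely needed (e.g.\ to identify $a''cb$ with $a''bc$), so ``every differential algebra'' in the statement must be read, as in the paper's setting, as a commutative associative differential algebra; your closing remark that the factor $2$ (rather than the identity itself) is where characteristic $\neq 2$ enters also matches how the lemma is used later in Case~1 of the basis theorem.
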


The next statement provides a linear basis for the space of symmetric elements in the free Novikov algebra $Nov\langle X\rangle$, expressed in terms of differential polynomials.

\begin{theorem}\label{basis}
The set $\overline{\mathcal{M}}:=X\cup\{\,u'\mid u\in\mathcal{M},\ \deg(u)-d(u)=2\,\}$ is a basis of the space of symmetric elements $S\langle X\rangle$.
\end{theorem}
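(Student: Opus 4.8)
The plan is to work inside the differential algebra $\mathbb{F}\{X\}$, exploiting the embedding $Nov\langle X\rangle\hookrightarrow\mathbb{F}\{X\}$ together with the crucial observation that the symmetrized product is a total derivative: by the Leibniz rule $a\circ b=a'b+b'a=(ab)'$. Consequently every symmetric element is homogeneous with $\deg-d=1$, and in each degree $\geq 2$ the space $S\langle X\rangle$ is spanned by elements of the form $(\,\cdot\,)'$. I would first dispose of linear independence. The elements of $X$ have degree $1$, while each $u'$ with $\deg(u)-d(u)=2$ has degree $\deg(u)\geq 2$, so it suffices to check that the $u'$ are independent; since the only elements of $\mathbb{F}\{X\}$ annihilated by $D$ are the constants, the map $D$ is injective on the positive-degree part, and the distinct basis monomials $u$ therefore have linearly independent images $u'$.

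Next comes the easy inclusion $S\langle X\rangle\subseteq\operatorname{span}(\overline{\mathcal{M}})$, by induction on degree. The set $X$ is the base case, and for a product $a\circ b=(ab)'$ with $a,b\in\operatorname{span}(\overline{\mathcal{M}})$ (each a combination of monomials of invariant $1$), the product $ab$ is a combination of monomials of invariant $2$ by additivity of $\deg$ and $d$, so $(ab)'=\sum_\alpha c_\alpha w_\alpha'$ lies in $\operatorname{span}(\overline{\mathcal{M}})$.

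The heart of the argument is the reverse inclusion: every $u'$ with $u\in\mathcal{M}$ and $\deg(u)-d(u)=2$ is symmetric. Here I rewrite Lemma \ref{lemma1} in the form $2(a'bc)'=(a\circ b)\circ c+(a\circ c)\circ b-(b\circ c)\circ a$. Since $S\langle X\rangle$ is closed under $\circ$, the right-hand side lies in $S\langle X\rangle$ whenever $a,b,c\in S\langle X\rangle$, so $(a'bc)'\in S\langle X\rangle$ for all symmetric $a,b,c$. Taking $a,b,c$ to be generators already yields every $u'$ with $d(u)=1$; I would then induct on degree, feeding previously constructed symmetric elements (of the shape $x_i$ or $(\cdots)'$) back into this identity.

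The main obstacle is reaching monomials that carry several derivatives or a high-order derivative, such as $(x_i'x_j'x_kx_l)'$ or $(x_i''x_jx_k)'$, for these cannot be written as $a'bc$ with all of $a,b,c$ symmetric, since a factor such as $x_i'x_j'$ or $x_i''$ is not the derivative of any symmetric element. Instead, substituting symmetric arguments of composite shape (for instance $b=x_j\circ x_k=(x_jx_k)'$) into the reformulated identity produces, after expanding by Leibniz, a specific combination of target monomials $u'$ together with terms carrying strictly less differential order on any single factor. The plan is to order the invariant-$2$ monomials by a monomial order recording how derivatives are distributed among factors, to show that suitable iterated $\circ$-products are triangular with invertible leading coefficient with respect to this order, and thereby to peel off each $u'$ individually, the lower-order corrections being symmetric by the induction hypothesis. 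Establishing this triangularity, equivalently the solvability of the linear system relating the available symmetric products to the monomials $u'$ of a fixed degree, is the technical crux of the proof.
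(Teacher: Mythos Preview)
Your setup is sound: linear independence via injectivity of $D$, the inclusion $S\langle X\rangle\subseteq\operatorname{span}\overline{\mathcal{M}}$ via $a\circ b=(ab)'$, and the reformulation of Lemma~\ref{lemma1} as $2(a'bc)'=(a\circ b)\circ c+(a\circ c)\circ b-(b\circ c)\circ a$ are all exactly what the paper uses. The gap is in the reverse inclusion, where you stop short of a proof and defer to an unspecified triangularity argument. (Incidentally, your example $(x_i''x_jx_k)'$ is not in $\overline{\mathcal{M}}$: the monomial $x_i''x_jx_k$ has invariant $1$, not $2$. The relevant degree-$4$ example is $(x_i''x_jx_kx_l)'$.)

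The paper avoids any triangularity entirely by a cleaner induction on $\deg(u)$, with two cases. \textbf{Case~1:} $u$ contains a factor $x_i'$. Since $\deg(u)-d(u)=2$, $u$ must also contain a zeroth-order factor $x_j$, so $u=x_i'x_ju_0$ with $\deg(u_0)-d(u_0)=1$. Apply Lemma~\ref{lemma1} with $a=x_i$, $b=x_j$, $c=u_0$ (note $u_0$ need not be symmetric). The terms $((x_iu_0)'x_j)'$ and $((x_ju_0)'x_i)'$ are handled because $x_iu_0$, $x_ju_0$ have invariant $2$ and degree $<m$, so their derivatives are symmetric by induction, and then one applies $\circ\,x_j$ or $\circ\,x_i$. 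For $((x_ix_j)'u_0)'$ the paper uses the substitution trick: introduce a fresh generator $t$, observe that $(tu_0)'\in\overline{\mathcal{M}}$ in the enlarged generating set with degree $m-1$, hence is symmetric by induction, and then specialize $t\mapsto x_i\circ x_j\in S\langle X\rangle$. \textbf{Case~2:} $u$ has no first-order factor. Then some $x_i^{(k)}$ with $k\ge2$ occurs, and invariant $2$ forces at least $k$ zeroth-order factors, say $u=x_i^{(k)}x_{j_1}\cdots x_{j_k}u_2$. Expanding $(x_i^{(k-1)}x_{j_1}\cdots x_{j_k})'$ by Leibniz gives
\[
u'=\bigl((x_i^{(k-1)}x_{j_1}\cdots x_{j_k})'u_2\bigr)'-\sum_{l}\bigl(x_i^{(k-1)}x_{j_1}\cdots x_{j_l}'\cdots x_{j_k}u_2\bigr)'.
\]
The first term is handled by the same substitution trick (set $t=(x_i^{(k-1)}x_{j_1}\cdots x_{j_k})'$, symmetric by induction), and every summand in the second group has a first-order factor, hence falls under Case~1. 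This closes the induction with no linear algebra at all; your proposed triangularity would work in principle but is both harder to make precise and unnecessary.
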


\begin{proof} 
Let $\operatorname{span}_{\mathbb{F}}\overline{\mathcal{M}}$ denote the linear span of $\overline{\mathcal{M}}$. 
We first show that $S\langle X\rangle \subseteq \operatorname{span}_{\mathbb{F}}\overline{\mathcal{M}}$. Since $S\langle X\rangle$ is the smallest subspace containing $X$ and closed under the symmetric product $a\circ b:=(ab)'$, it suffices to verify:
(i) $x_i\circ x_j\in  \overline{\mathcal{M}}$, for $x_i,x_j\in X$; 
(ii) $w\circ x_i\in \operatorname{span}_{\mathbb{F}}\overline{\mathcal{M}}$ for $w\in \overline{\mathcal{M}}$, $x_i\in X$; 
(iii) $w\circ v\in \operatorname{span}_{\mathbb{F}}\overline{\mathcal{M}}$ for $w,v\in \overline{\mathcal{M}}$.

\smallskip
\emph{(i)} For $x_i,x_j\in X$ we have 
$x_i\circ x_j=(x_ix_j)' \in \overline{\mathcal{M}}$, since $u=x_ix_j$ satisfies $\deg(u)-d(u)=2$.

\smallskip
\emph{(ii)} Let $w\in \overline{\mathcal{M}}$ with $\deg(w)>1$. Then $w=u'$ for some $u=x_{l_1}^{(k_1)}\cdots x_{l_n}^{(k_n)}\in\mathcal{M}$ such that $\deg(u)-d(u)=2$. For $x_i\in X$,
$$
w\circ x_i=(u' x_i)'=\Bigl(\sum_{j=1}^{n} x_{l_1}^{(k_1)}\cdots x_{l_j}^{(k_j+1)}\cdots x_{l_n}^{(k_n)} x_i\Bigr)' 
= \sum_{j=1}^{n} \bigl(u_j\,x_i\bigr)',
$$
where $u_j:=x_{l_1}^{(k_1)}\cdots x_{l_j}^{(k_j+1)}\cdots x_{l_n}^{(k_n)}$. 
For each $j$, $\deg(u_j)=\deg(u)$ and $d(u_j)=d(u)+1$, hence
$$
\deg(u_jx_i)-d(u_jx_i)=(\deg(u)+1)-(d(u)+1)=\deg(u)-d(u)=2,
$$
so $(u_jx_i)'\in \overline{\mathcal{M}}$. Therefore $w\circ x_i\in \operatorname{span}_{\mathbb{F}}\overline{\mathcal{M}}$.

\smallskip
\emph{(iii)} Let $v=z'$ with $z=x_{s_1}^{(r_1)}\cdots x_{s_m}^{(r_m)}\in\mathcal{M}$ and $\deg(z)-d(z)=2$. Then
\begin{align*}
w\circ v&=(u' z')' 
=\Bigl(\sum_{j=1}^{n} u_j \cdot \sum_{q=1}^{m} z_q\Bigr)' 
= \sum_{j=1}^{n}\sum_{q=1}^{m} (u_j z_q)',
\end{align*}
where $w_j$ and $z_q$ are obtained from $w$ and $z$ by increasing the $j$-th and $q$-th differential orders by $1$, respectively. Consequently
$$\deg(u_j z_q)-d(u_j z_q)$$
$$=(\deg(u)+\deg(z))-(d(u)+1+d(z)+1)
=(\deg(u)-d(u))+(\deg(z)-d(z))-2=2.$$

Hence each $(u_j z_q)'\in \overline{\mathcal{M}}$, and so $w\circ v\in \operatorname{span}_{\mathbb{F}}\overline{\mathcal{M}}$. Combining (i)–(iii), we conclude $S\langle X\rangle \subseteq \operatorname{span}_{\mathbb{F}}\overline{\mathcal{M}}$.

Now we prove that every element of $\overline{\mathcal{M}}$ belongs to $S\langle X\rangle$. 
We proceed by induction on the degree $\deg(w)$ of elements $w\in\overline{\mathcal{M}}$.

\smallskip
\emph{Base cases.}
If $\deg(w)=1$, then $w=x_i\in S\langle X\rangle$.
If $\deg(w)=2$, then necessarily $w=(x_ix_j)'=x_i\circ x_j\in S\langle X\rangle$.

\smallskip
\emph{Inductive step.}
Assume the claim holds for all elements of $\overline{\mathcal{M}}$ of degree $<m$.
Let $w=u' \in \overline{M}$ with $\deg(u)=m>2$ and $\deg(u)-d(u)=2$, where
\[
u=x_{l_1}^{(k_1)}\cdots x_{l_m}^{(k_m)}\in\mathcal{M}.
\]
To prove $u'\in S\langle X\rangle$ we consider two cases.

\smallskip
\textbf{Case 1: $u$ contains a first–order factor.}
Suppose some $x_i'$ occurs in $u$.  Since $\deg(u)-d(u)=2$, we can choose $x_j\in X$ and a monomial $u_0$ such that
\[
u=x_i' x_j u_0,\qquad \deg(u_0)-d(u_0)=1.
\]
By Lemma~\ref{lemma1} we have
\[
u'=(x_i' x_j u_0)'=\tfrac12\Big(((x_i x_j)'u_0)'+((x_i u_0)'x_j)'-((x_j u_0)'x_i)'\Big).
\]
Here $\deg(x_iu_0)=\deg(u_0)+1<m$ and $\deg(x_iu_0)-d(x_iu_0)=2$, hence 
\((x_iu_0)'\), \((x_ju_0)'\in S\langle X\rangle\) by the induction hypothesis; therefore
\(((x_i u_0)'x_j)'=(x_iu_0)'\circ x_j\) and \(((x_ju_0)'x_i)'=(x_ju_0)'\circ x_i\) lie in $S\langle X\rangle$ as well.

It remains to consider the term $((x_i x_j)'u_0)'$. Since $(x_i x_j)'=x_i\circ x_j\in S\langle X\rangle$, set $t=x_i\circ x_j$. Introducing a new variable $y$, we have $\deg(yu_0)=m-1$ and $d(yu_0)=m-3$. Hence, by the induction hypothesis, $(yu_0)'\in S\langle X,y\rangle$. Substituting $t$ for $y$, we obtain $(tu_0)'\in S\langle X\rangle$, that is,
$((x_i x_j)'u_0)'\in S\langle X\rangle$. Thus all three summands are in $S\langle X\rangle$, and hence $u'\in S\langle X\rangle$.

\smallskip
\textbf{Case 2: $u$ contains no first–order factor.} Since $\deg(u)-d(u)=2$ and no factor $x_i'$ occurs, there exist differential monomials $u_1$ and $u_2$ such that $u=u_1u_2$, where $u_1=x_i^{(k)}x_{j_1}\cdots x_{j_k}$ with $k>1$  for some $i$.  Then 
\[
u'=(x_i^{(k)}x_{j_1}\cdots x_{j_k}u_2)'=\big((x_i^{(k-1)}x_{j_1}\cdots x_{j_k})'u_2\big)'
-\sum_{l=1}^{k}\big(x_i^{(k-1)}x_{j_1}\cdots x_{j_l}'\cdots x_{j_k}u_2\big)'.
\]

For the first term, we have
$\deg(u_2)-d(u_2)=\deg(u)-(k+1)-\bigl(d(u)-k\bigr)=1$.
By the induction hypothesis,
$t=\left(x_i^{(k-1)}x_{j_1}\cdots x_{j_k}\right)'\in S\langle X\rangle$.
Introducing a new variable $y$, we have
$\deg(yu_2)-d(yu_2)=2$ and $\deg(yu_2)<\deg(u)$.
Hence, by the induction hypothesis,
$(yu_2)'\in S\langle X,y\rangle$.
Substituting $t$ for $y$, we obtain
$(tu_2)'\in S\langle X\rangle$.

Each summand in the remaining finite sum contains a first-order factor
$x_{j_l}'$ and therefore belongs to $S\langle X\rangle$ by Case~1.

This completes the induction and hence every element of $\overline{\mathcal{M}}$ belongs to $S\langle X\rangle$.

Suppose that $\left(\sum_u \lambda_u u\right)'=0$.
The derivation $D$ has trivial kernel on the subspace of differential polynomials of positive degree. Therefore,
$\sum_u \lambda_u u=0$.
Since the differential monomials form a basis of the polynomial differential algebra, it follows that $\lambda_u=0$ for all $u$. Thus $\overline{\mathcal{M}}$ is linearly independent. Consequently, $\overline{\mathcal{M}}$ is a basis of $S\langle X\rangle$.

\end{proof}

\begin{remark}
The proof of Theorem~\ref{basis} yields a recursive algorithm for expressing every symmetric differential monomial $u'$, where $\deg(u)-d(u)=2$, as a linear combination of nonassociative monomials with respect to the product $\circ$.

If $\deg(u)=2$, then $u=x_ix_j$, and therefore $u'=x_i\circ x_j$.

Assume now that $\deg(u)>2$.

\textbf{Case 1.} Suppose that $u$ contains a first-order factor. Then one can write $u=x_i'x_ju_0$, where $\deg(u_0)-d(u_0)=1$. By Lemma~\ref{lemma1},
$u'=\frac12\left(((x_ix_j)'u_0)'+((x_iu_0)'x_j)'-((x_ju_0)'x_i)'\right)$.
The elements $(x_iu_0)'$ and $(x_ju_0)'$ have smaller degree and are therefore expressed recursively as $\circ$-polynomials. Hence the second and third terms are obtained by multiplying these expressions by $x_j$ and $x_i$, respectively, with respect to $\circ$.

For the first term, introduce a new variable $y$. Since $\deg(yu_0)-d(yu_0)=2$ and $\deg(yu_0)<\deg(u)$, the element $(yu_0)'$ is expressed recursively as a $\circ$-polynomial in $X\cup{y}$. Substituting $x_i\circ x_j$ for $y$ gives the required expression for $((x_ix_j)'u_0)'$.

\textbf{Case 2.} Suppose that $u$ contains no first-order factor. Since $\deg(u)>2$ and $\deg(u)-d(u)=2$, the monomial $u$ contains a factor $x_i^{(k)}$ with $k>1$. Choose $k$ undifferentiated factors $x_{j_1},\ldots,x_{j_k}$ and write $u=x_i^{(k)}x_{j_1}\cdots x_{j_k}u_2$. Then
$u'=\left(\left(x_i^{(k-1)}x_{j_1}\cdots x_{j_k}\right)'u_2\right)'
-\sum_{l=1}^{k}\left(x_i^{(k-1)}x_{j_1}\cdots x_{j_l}'\cdots x_{j_k}u_2\right)'$.

The element $\left(x_i^{(k-1)}x_{j_1}\cdots x_{j_k}\right)'$ has smaller degree and is first expressed recursively as a $\circ$-polynomial. Introducing a new variable $y$, one then expresses $(yu_2)'$ recursively and substitutes the obtained $\circ$-polynomial for $y$. Each summand in the remaining finite sum contains a first-order factor and is therefore reduced by Case~1.

At each step, the procedure reduces the problem to differential monomials of smaller degree. Hence, after finitely many steps, one reaches the initial case $\deg(u)=2$. Therefore every symmetric differential monomial $u'$ can be expressed effectively as a linear combination of nonassociative $\circ$-monomials in the generators $X$.
\end{remark}
\begin{example}
Consider the symmetric differential monomial $(x''x^3)'$. Since $\deg(x''x^3)-d(x''x^3)=4-2=2$ and $x''x^3$ contains no first-order factor, we first apply Case~2. Taking $k=2$, we obtain
$(x''x^3)'=((x'x^2)'x)'-2((x')^2x^2)'$.

The first term is reduced by Case~1. Indeed,
$(x'x^2)'=\frac12((x^2)'x)'=\frac12(x\circ x)\circ x$,
and hence
$((x'x^2)'x)'=\frac12((x\circ x)\circ x)\circ x$.

It remains to express $((x')^2x^2)'$. Applying Case~1 to the monomial $(x')^2x^2$, we obtain
$((x')^2x^2)'=\frac12(((x^2)'x'x)')$.
To treat the latter term, introduce a new variable $y$. By Case~1,
$(yx'x)'=\frac12(x\circ x)\circ y$.
Substituting $y=x\circ x=(x^2)'$, we get
$(((x^2)'x'x)')=\frac12(x\circ x)\circ(x\circ x)$.
Therefore,
$((x')^2x^2)'=\frac14(x\circ x)\circ(x\circ x)$.

Combining the above expressions, we finally obtain
$(x''x^3)'=\frac12((x\circ x)\circ x)\circ x-\frac12(x\circ x)\circ(x\circ x)$.

Thus,
$(x''x^3)'=\frac12\left(((x\circ x)\circ x)\circ x-(x\circ x)\circ(x\circ x)\right)$.
This example illustrates the recursive procedure in which Case~2 is applied first and the resulting terms are subsequently reduced using Case~1.
\end{example}

As an immediate consequence of Theorem \ref{basis}, we obtain the following structural property of the algebra $S\langle X\rangle$.

\begin{corollary}\label{ideal}
We have
\[Nov\langle X\rangle\circ Nov\langle X\rangle
\subseteq S\langle X\rangle.
\]
In particular, $S\langle X\rangle$ is an ideal of the symmetrized algebra
$\operatorname{Nov}\langle X\rangle^{(+)}$.
\end{corollary}
\begin{proof}
Let $f,g$ be basis elements of $Nov\langle X\rangle$. Then $\deg(f)-d(f)=\deg(g)-d(g)=1$, and hence $\deg(fg)-d(fg)=2$. Since $f\circ g=(fg)'$, by Theorem \ref{basis}, we have $f\circ g\in S\langle X\rangle$. Therefore,
$Nov\langle X\rangle\circ Nov\langle X\rangle\subseteq S\langle X\rangle$.
\end{proof}

Write $Sym_n$ for the subspace of symmetric elements in $F^{\mathrm{multi}}_n$.  We now compute the dimension of the symmetric subspace.

\begin{corollary}\label{dimension}
The dimension of the space $Sym_n$ of symmetric elements in $F^{\mathrm{multi}}_n$ is given by $
\dim Sym_1=1,
$
and, for $n>1$,
$$
\dim Sym_n
=
\frac{1}{2}\dim F^{\mathrm{multi}}_n
=
\frac{1}{2}\binom{2n-2}{n-1}.
$$
\end{corollary}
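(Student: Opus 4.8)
The plan is to read off a basis of $Sym_n$ directly from the basis $\overline{\mathcal{M}}$ of $S\langle X\rangle$ supplied by the preceding theorem, and then to count. A multilinear element in $x_1,\dots,x_n$ has multidegree $(1,\dots,1)$ and hence standard degree $n$, so $Sym_n$ is spanned by those members of $\overline{\mathcal{M}}$ lying in this multidegree. First I would dispose of the generators: each $x_i\in X$ has degree $1$, so for $n\ge 2$ none of them is multilinear in all $n$ variables. Consequently the relevant basis elements are exactly the $u'$ with $u\in\mathcal{M}$ multilinear in $x_1,\dots,x_n$ and $\deg(u)-d(u)=2$; since such a $u$ has $\deg(u)=n$, the condition reduces to $d(u)=n-2$.

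Next I would parametrize these monomials. Writing $u=x_1^{(a_1)}x_2^{(a_2)}\cdots x_n^{(a_n)}$ with $a_i\ge 0$, multilinearity forces each variable to occur exactly once, and $d(u)=n-2$ becomes $a_1+\cdots+a_n=n-2$. Distinct tuples $(a_1,\dots,a_n)$ yield distinct monomials $u$, hence distinct and (since $\overline{\mathcal{M}}$ is a basis) linearly independent elements $u'\in Sym_n$. Thus $\dim Sym_n$ equals the number of weak compositions of $n-2$ into $n$ nonnegative parts, which by a stars-and-bars count is $\binom{(n-2)+(n-1)}{n-1}=\binom{2n-3}{n-1}$.

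Finally I would match this with the target. Using Pascal's rule together with the symmetry $\binom{2n-3}{n-2}=\binom{2n-3}{n-1}$ gives $\binom{2n-2}{n-1}=\binom{2n-3}{n-2}+\binom{2n-3}{n-1}=2\binom{2n-3}{n-1}$, so that $\binom{2n-3}{n-1}=\tfrac12\binom{2n-2}{n-1}$, as claimed (for $n\ge 2$; the case $n=1$ is degenerate, since then the generator set $X$ itself contributes to the multilinear component). I do not anticipate a serious obstacle here: the only point demanding care is the translation of ``multilinear of degree $n$'' into the numerical constraint $d(u)=n-2$, together with the observation that the generators in $X$ drop out, after which the statement is a routine binomial identity.
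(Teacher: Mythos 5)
Your proposal is correct and follows essentially the same route as the paper's proof: restrict the basis $\overline{\mathcal{M}}$ to the multilinear component, reduce the condition $\deg(u)-d(u)=2$ to $i_1+\cdots+i_n=n-2$, count weak compositions by stars-and-bars to get $\binom{2n-3}{n-1}$, and apply the identity $\binom{2n-3}{n-1}=\tfrac12\binom{2n-2}{n-1}$. In fact you are slightly more careful than the paper, since you explicitly note that the generators $x_i\in X$ contribute nothing to the multilinear component for $n\ge 2$ and you justify the binomial identity via Pascal's rule rather than citing it as elementary.
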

\begin{proof}
By Theorem~\ref{basis}  the description of symmetric elements, a basis of $Sym_n$ consists of differential polynomials of the form
$$
\bigl(x^{(i_1)}_{1}\cdots x^{(i_n)}_{n}\bigr)' \qquad\text{with}\qquad
\deg(x^{(i_1)}_{1}\cdots x^{(i_n)}_{n})-d\bigl(x^{(i_1)}_{1}\cdots x^{(i_n)}_{n}\bigr)=2.
$$
In the multilinear component we have $\deg(x^{(i_1)}_{1}\cdots x^{(i_n)}_{n})=n$, so the condition is
$$
i_1+\cdots+i_n = n-2.
$$
Thus $\dim Sym_n$ equals the number of weak compositions of $n-2$ into $n$ parts, which by stars–and–bars is
$$
\binom{(n-2)+n-1}{\,n-1\,}=\binom{2n-3}{\,n-1\,}.
$$
Using the elementary identity
$$
\binom{2n-3}{\,n-1\,}=\frac{1}{2}\binom{2n-2}{\,n-1\,},
$$
we obtain the stated formula.
\end{proof}

\section{\label{2}\ A criterion for symmetric elements}
In this section, we present a criterion for determining symmetric elements in free Novikov algebras $Nov\langle X\rangle$.  The criterion will be given in terms of the Euler operators.  

By definition (see for example \cite{Olver1993}) {\it $k$-th Euler operator or the variational derivative} with respect to the variable $x_k$ is defined by 
$$E^{k}=\sum_{i=0}^{\infty}(-D)^i\frac{\partial}{\partial x^{(i)}_k}.$$
The (total) {\it Euler operator}  
$$E:\mathbb{F}\{X\}\rightarrow \mathbb{F}\{X\}^{n}=\underbrace{\mathbb{F}\{X\}\times \cdots\times \mathbb{F}\{X\}}_n$$
is defined by $E(f)=(E^1(f),\ldots, E^n(f))$ for $f\in \mathbb{F}\{X\}$. 

Gel'fand and Dikii \cite{GelʹfandDikii1975} studied the exactness of a chain complex involving the operators $E$ and $D$ over a field of characteristic zero, in the case where the set $X$ contains a single variable $x_1$. Later, Olver and Shakiban \cite{OlverShakiban1978}, building on the Gel'fand-Dikii transform, extended this complex by introducing additional operators.
For the present discussion, we record only the following subsequence of the operators they considered. Specifically,

\begin{theorem}[\cite{GelʹfandDikii1975}, \cite{OlverShakiban1978}]\label{GelʹfandDikii1975}
The complex
$$0\xrightarrow[]{}\mathbb{F}\{x_1\}\xrightarrow[]{D}\mathbb{F}\{x_1\}\xrightarrow[]{E^1}\mathbb{F}\{x_1\}$$
is exact. In other words, $E^1(f)=0$ if and only if there exists $g\in\mathbb{F}\{x_1\}$ such that $f=D(g)$.
\end{theorem}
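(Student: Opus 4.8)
The plan is to prove the two inclusions $\operatorname{im} D\subseteq\ker E^1$ and $\ker E^1\subseteq\operatorname{im} D$ separately. Since every nonzero constant lies in $\ker E^1$ but not in $\operatorname{im} D$, and since the Lagrangians relevant here have positive degree, I would state and prove the equivalence on the space of differential polynomials with zero constant term (equivalently, work modulo $\mathbb{F}$). The inclusion $\operatorname{im} D\subseteq\ker E^1$ (``total derivatives are null'') is a direct computation. Writing $D=\sum_{k\ge 0}x_1^{(k+1)}\,\partial/\partial x_1^{(k)}$ and abbreviating $\partial_{(i)}=\partial/\partial x_1^{(i)}$, one checks the commutation relation $\partial_{(i)}\circ D=D\circ\partial_{(i)}+\partial_{(i-1)}$ (with $\partial_{(-1)}=0$), which follows from the Leibniz rule together with $\partial_{(i)}\bigl(x_1^{(k+1)}\bigr)=\delta_{i,k+1}$. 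Substituting this into $E^1(Dg)=\sum_{i\ge0}(-D)^i\partial_{(i)}(Dg)$ and reindexing makes the two resulting sums cancel termwise, so $E^1(Dg)=0$ for every $g$. Exactness at the first node, i.e.\ injectivity of $D$, is immediate once constants are excluded, by inspecting the top-order term of $Dg$.

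For the nontrivial inclusion $\ker E^1\subseteq\operatorname{im} D$ I would argue by induction on the order $m$ of $f$, that is, the largest $m$ with $\partial_{(m)}f\neq 0$. In the base case $m=0$ we have $f=f(x_1)$ and $E^1(f)=\partial_{(0)}f=0$ forces $f$ to be constant, hence $f=0$ in the zero-constant-term setting, which certainly lies in $\operatorname{im} D$. For the inductive step, the key observation is that $E^1(f)=0$ constrains the top-order behaviour of $f$. Among the summands of $E^1(f)=\sum_{i=0}^{m}(-D)^i\partial_{(i)}f$, only $i=m$ can produce the derivative $x_1^{(2m)}$, and a short computation shows that the coefficient of $x_1^{(2m)}$ in $(-D)^m\partial_{(m)}f$ equals $(-1)^m\,\partial_{(m)}^2 f$. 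Hence $E^1(f)=0$ forces $\partial_{(m)}^2 f=0$, so $f$ is affine in its highest variable:
$$f=P\,x_1^{(m)}+Q,\qquad P,Q\ \text{of order}\le m-1.$$
Because the ground field has characteristic zero I may antidifferentiate the polynomial $P$ with respect to the variable $x_1^{(m-1)}$, obtaining $H\in\mathbb{F}\{x_1\}$ of order $\le m-1$ with $\partial_{(m-1)}H=P$. Then $D(H)=P\,x_1^{(m)}+R$ with $R$ of order $\le m-1$, since $x_1^{(m)}$ arises in $D(H)=\sum_k x_1^{(k+1)}\partial_{(k)}H$ only from the single term $k=m-1$. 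Consequently $f-D(H)=Q-R$ has order $\le m-1$ and zero constant term, and by the first inclusion $E^1\bigl(f-D(H)\bigr)=E^1(f)=0$. The induction hypothesis gives $f-D(H)=D(g_1)$, whence $f=D(H+g_1)\in\operatorname{im} D$.

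The main obstacle is the inductive step, and specifically the two points that force the order to drop: first, extracting the leading ($x_1^{(2m)}$) coefficient of $E^1(f)$ cleanly enough to deduce $\partial_{(m)}^2 f=0$, which rests on knowing that the coefficient of the top derivative of $D^j g$ is $\partial_{(\operatorname{ord} g)}g$; and second, choosing the correct antiderivative $H$ (integrated in the \emph{polynomial} variable $x_1^{(m-1)}$, not any formal integral of $D$) so that $f-D(H)$ genuinely loses a derivative rather than merely trading one top term for another. Characteristic zero is used precisely for this antidifferentiation. An alternative, more formulaic route that avoids the induction is to invoke the higher Euler operators and homotopy operator of Olver--Shakiban \cite{OlverShakiban1978, Shakiban1981}, which produce an explicit $g$ with $Dg=f$ directly from $E^1(f)=0$; I would mention this, as the same machinery governs the generalized Gel'fand--Dikii transform used later.
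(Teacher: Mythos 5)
Your proof is correct, but there is nothing in the paper to compare it against line by line: the paper does not prove this statement at all, it imports it from Gel'fand--Dikii and Olver--Shakiban. Those sources follow the route you mention only in passing at the end, namely explicit homotopy (integration-by-parts) operators that resolve the Euler operator and produce a concrete $g$ with $D(g)=f$; your argument is instead the elementary self-contained one. Its key steps all check out: the commutation relation $\partial_{(i)}\circ D=D\circ\partial_{(i)}+\partial_{(i-1)}$ makes $E^1(Dg)$ telescope to zero; for $f$ of order $m\ge 1$, only the $i=m$ summand of $E^1(f)=\sum_i(-D)^i\partial_{(i)}f$ can reach order $2m$, and since $D^j$ applied to a polynomial $h$ of order $\le m$ equals $x_1^{(m+j)}\partial_{(m)}h$ plus lower-order terms, the coefficient of $x_1^{(2m)}$ in $E^1(f)$ is indeed $(-1)^m\partial_{(m)}^2f$, forcing $f=P\,x_1^{(m)}+Q$ to be affine in its top variable; and subtracting $D(H)$ with $\partial_{(m-1)}H=P$ (characteristic zero used exactly here) drops the order, with $f-D(H)$ still a null Lagrangian by the first inclusion, so the induction closes. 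Your opening caveat about constants is not pedantry but necessary for the statement to be literally true: if $\mathbb{F}\{x_1\}$ contains constants, then $D(1)=0$ kills exactness at the first node and nonzero constants lie in $\ker E^1\setminus\operatorname{im}D$, so restricting to zero constant term (harmless for the paper, where all relevant elements of $Nov\langle X\rangle$ have positive degree) is the right formulation --- a point the paper's bare citation glosses over. The trade-off between the two routes: yours is elementary and self-contained, while the cited machinery (higher Euler operators, the Gel'fand--Dikii transform) gives explicit inversion formulas and is in any case needed by the paper later, for Theorem \ref{Shakiban1981} and Lemma \ref{lem:Shakiban}.
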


Subsequently, Shakiban \cite{Shakiban1981} generalized this result in two directions: first, to the case $n\ge 1$; and second, to the setting where the variables $x_1,\ldots,x_n$ depend on independent variables $t_1,\ldots,t_m$, namely $x_i=x_i(t_1,\ldots,t_m)$. For our purposes, we take $m=1$, i.e., $x_i=x_i(t_1)$, with $D$ defined by $D_1=\tfrac{d}{dt_1}$. In this specialization, her theorem takes the following form:

\begin{theorem}[\cite{Shakiban1981}]\label{Shakiban1981}
The complex
$$0\xrightarrow[]{}\mathbb{F}\{X\}\xrightarrow[]{D}\mathbb{F}\{X\}\xrightarrow[]{E}\mathbb{F}\{X\}$$
is exact. In other words, $E^k(f)=0$ for all $k\in\{1,\ldots,n\}$ if and only if there exists $g\in\mathbb{F}\{X\}$ such that $f=D(g)$.
\end{theorem}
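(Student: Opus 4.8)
The plan is to establish both inclusions of $\ker E=\operatorname{im}D$ directly, treating all $n$ variables simultaneously by means of an explicit homotopy built from integration by parts, rather than inducting on $n$ (which is awkward, since $D$ couples the variables). The easy inclusion $\operatorname{im}D\subseteq\ker E$ is the operator identity $E^k\circ D=0$. To prove it I would first record the commutation relation
$$
\frac{\partial}{\partial x_k^{(i)}}\circ D=D\circ\frac{\partial}{\partial x_k^{(i)}}+\frac{\partial}{\partial x_k^{(i-1)}},
$$
valid with the convention $\partial/\partial x_k^{(-1)}=0$; it follows by writing the total derivative as $D=\sum_{l,j}x_l^{(j+1)}\,\partial/\partial x_l^{(j)}$ and differentiating. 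Substituting this into $E^kD=\sum_{i\ge0}(-D)^i\,\partial/\partial x_k^{(i)}\circ D$ makes the sum telescope to $0$. Exactness at the left-hand node, namely $\ker D=\mathbb{F}$, is classical, and gives injectivity of $D$ on the (non-unital) augmentation ideal on which the complex is considered.

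For the substantive inclusion $\ker E\subseteq\operatorname{im}D$ I would first reduce to the homogeneous case: $D$ preserves the total degree $\deg$ while each $E^k$ lowers it by one, so $E^k(f)=0$ holds separately on every homogeneous component of $f$, and a $D$-preimage can be assembled componentwise (the degree-zero part is constant and lies outside the ideal of interest). So let $f$ be homogeneous of degree $m\ge1$. The heart of the argument is the integration-by-parts identity
$$
\sum_{k=1}^{n}\sum_{i\ge0}x_k^{(i)}\,\frac{\partial f}{\partial x_k^{(i)}}=\sum_{k=1}^{n}x_k\,E^k(f)+D(P_f),
$$
where $P_f=\sum_{k}\sum_{i\ge1}\sum_{j=0}^{i-1}(-1)^j\,x_k^{(i-1-j)}\,D^j\!\big(\tfrac{\partial f}{\partial x_k^{(i)}}\big)\in\mathbb{F}\{X\}$. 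This is obtained by applying, for each $k$ and $i$, the elementary telescoping identity $a^{(i)}b=(-1)^i a\,b^{(i)}+D\big(\sum_{j=0}^{i-1}(-1)^j a^{(i-1-j)}b^{(j)}\big)$ with $a=x_k$ and $b=\partial f/\partial x_k^{(i)}$, and summing; here $(-1)^i b^{(i)}=(-D)^i b$ reassembles $E^k(f)$ on the right.

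Finally, Euler's homogeneity relation in the polynomial variables $x_k^{(i)}$ evaluates the left-hand side as $m\,f$, so that $m\,f=\sum_k x_k E^k(f)+D(P_f)$. Under the hypothesis $E^k(f)=0$ for all $k$ this collapses to $f=D(P_f/m)$, exhibiting the required $g=P_f/m$. The step I expect to be the main obstacle is establishing the homotopy identity with the correct explicit $P_f$: the bookkeeping in the repeated integration by parts---tracking signs, verifying that the boundary terms telescope, and confirming that the residual is a single total derivative $D(P_f)$ rather than merely a total derivative up to lower-order corrections---is where errors creep in, whereas the commutation relation, the telescoping proof of $E^kD=0$, and the reduction to homogeneous $f$ are routine. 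I would also note that this argument subsumes the single-variable Gel'fand--Dikii statement as the case $n=1$; alternatively one could attempt an induction on $n$ that integrates out $x_n$ using the single-variable result, but the coupling of all variables by $D$ makes the direct homotopy cleaner.
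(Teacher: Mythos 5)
Your proof is correct, but the comparison here is necessarily one-sided: the paper gives no proof of Theorem~\ref{Shakiban1981} at all --- it is imported from Shakiban \cite{Shakiban1981}, where (as in \cite{OlverShakiban1978} for the one-variable case) exactness is proved via the generalized Gel'fand--Dikii transform, i.e.\ by transporting $D$ and the operators $E^k$ to the symmetric-tensor side, where they become substitution operators; this is precisely the machinery the paper recalls right after the statement (Theorem~\ref{thm:Shakiban81-iso}, Lemma~\ref{lem:Shakiban}) en route to Lemma~\ref{hom-Shakiban}. Your argument is the classical explicit-homotopy alternative, and it is complete: the commutation relation $\frac{\partial}{\partial x_k^{(i)}}\circ D=D\circ\frac{\partial}{\partial x_k^{(i)}}+\frac{\partial}{\partial x_k^{(i-1)}}$ does make $E^k\circ D$ telescope to zero; the identity $a^{(i)}b=(-1)^i ab^{(i)}+D\bigl(\sum_{j=0}^{i-1}(-1)^j a^{(i-1-j)}b^{(j)}\bigr)$ is a one-line telescoping check; summing it over $k$ and $i$ with $a=x_k$, $b=\partial f/\partial x_k^{(i)}$ and invoking Euler's homogeneity relation yields $m f=\sum_k x_k E^k(f)+D(P_f)$, hence $f=D(P_f/m)$ when all $E^k(f)$ vanish, and the reduction to homogeneous $f$ is legitimate because $D$ preserves $\deg$ while each $E^k$ lowers it by one (characteristic zero is used to divide by $m$). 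The trade-off between the two routes: yours is elementary, self-contained, and even produces an explicit preimage $g=P_f/m$, which is all the present paper extracts from the theorem; Shakiban's transform proof is what extends to several independent variables $t_1,\ldots,t_m$ and, more to the point here, is needed anyway for Lemma~\ref{lem:Shakiban}, on which Lemma~\ref{hom-Shakiban} rests, so the paper loses nothing by quoting the result. One genuine merit of your write-up: as literally stated the complex is not exact on the unital algebra (a nonzero constant $c$ satisfies $E(c)=0$ but $c\notin\operatorname{im}D$, and $D(1)=0$ breaks injectivity at the left node); your explicit restriction to the augmentation ideal is the correct reading of the statement, consistent with the paper's non-unital conventions.
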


Under the assumptions of Theorem \ref{Shakiban1981}, we will show that if $f=f(x_1,\ldots,x_n)\in Nov(X)$ is homogeneous and $\deg_{x_i}(f)>0$ for every $i$, then the vanishing of $E^k(f)$ for some $k\in\{1,\ldots,n\}$ forces $E^j(f)=0$ for all $j\in\{1,\ldots,n\}$. Equivalently, for such $f$, the conditions $E^1(f)=\cdots=E^n(f)=0$ are all equivalent.

First, we briefly recall the \emph{generalized Gel'fand–Dikii transform}, introduced in \cite{Shakiban1981}. We present it in the case relevant to us, namely $x_i = x_i(t_1)$ for all $i$, over a field $\mathbb{F}$ of characteristic zero.

Define the symmetric algebra of $\mathbb{F}^k$
$$S^k=\bigodot\mathbb{F}^k=\bigoplus_{i=0}^{\infty}\bigodot_i\mathbb{F}^{k},$$
and let $\mathscr{L}_N=S^{N_1}\otimes\cdots \otimes S^{N_n}$. Let $\{e_{k1},\ldots,e_{kN_k}\}$ be a basis of $\mathbb{F}^{N_k}$. For a multi-index $I=(i_1,\ldots,i_{N_k})\in\mathbb{Z}_{\ge0}^{N_k}$ set
$$e_k^{I}:=e_{k1}^{\,i_1}\odot \cdots \odot e_{kN_k}^{\,i_{N_k}},$$
so that the elements $e_k^{I}$ (as $I$ varies) form a basis of $S^{N_k}$.

Then the simple tensors
$$e_1^{I_1}\otimes \cdots \otimes e_n^{I_n}$$
form a basis of $\mathscr{L}_N$. Thus, if $\phi\in\mathscr{L}_N$, we may regard $\phi$ as a polynomial in the variables $e_{kj}$ ($1\le k\le n$, $1\le j\le N_k$) and write
$$\phi=\phi(E_1,\ldots,E_n),\qquad E_k:=(e_{k1},\ldots,e_{kN_k}).$$
Let $\pi_k\in S_{N_k}$ be a permutation of $\{1,\ldots,N_k\}$ acting on $E_k$ by
$$\pi_k E_k=(e_{k\pi_k(1)},\ldots,e_{k\pi_k(N_k)}).$$
For $\pi=(\pi_1,\ldots,\pi_n)$ with $\pi_k\in S_{N_k}$, define the induced action on $\mathscr{L}_N$ by
$$\widehat{\pi}(\phi)=\phi(\pi_1E_1,\ldots,\pi_nE_n).$$

Now define the symmetrization map
$$\sigma=\frac{1}{N_1!\cdots N_n!}\sum_{\pi}\,\widehat{\pi}:\ \mathscr{L}_N\longrightarrow \mathscr{L}_N,$$
where the sum ranges over all $n$-tuples $\pi=(\pi_1,\ldots,\pi_n)$ with $\pi_k\in S_{N_k}$. Set $\widehat{\mathscr{L}_N}=\sigma(\mathscr{L}_N)$, and write
$$\mathscr{L}=\bigoplus_{N}\mathscr{L}_N,\qquad \widehat{\mathscr{L}}=\bigoplus_{N}\widehat{\mathscr{L}_N}.$$

By \cite[Def.~3]{Shakiban1981}, the generalized Gel'fand–Dikii transform is the linear map
\[
\mathscr{F}:\mathbb{F}\{X\}\to \mathscr{L}
\]
defined on monomials by
$$
\mathscr{F}\bigl(x_1^{I_1}\cdots x_n^{I_n}\bigr)
=\sigma\bigl(e_1^{I_1}\otimes \cdots \otimes e_n^{I_n}\bigr)
=\frac{1}{N_1!\cdots N_n!}
\sum_{\pi_1\in S_{N_1},\,\ldots,\,\pi_n\in S_{N_n}}
e_1^{\pi_1 I_1}\otimes \cdots \otimes e_n^{\pi_n I_n},
$$
where each $I_k=(i_{k1},\ldots,i_{kN_k})$ is a multi-index and
\[
\pi_k I_k = \bigl(i_{k\,\pi_k(1)},\ldots,i_{k\,\pi_k(N_k)}\bigr)
\quad\text{for } \pi_k\in S_{N_k},\ k=1,\ldots,n.
\]

\begin{theorem}[\cite{Shakiban1981}, Thm.~4]\label{thm:Shakiban81-iso}
The generalized Gel'fand–Dikii transform is a linear isomorphism
\[
\mathscr{F}:\mathbb{F}\{X\}_N \longrightarrow \widehat{\mathscr{L}_N}.
\]
\end{theorem}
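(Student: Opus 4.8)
The plan is to exhibit compatible bases on the two sides and check that $\mathscr{F}$ matches them up to nonzero scalars, so the isomorphism becomes a bookkeeping statement about a permutation representation rather than an analytic one.

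First I would pin down a basis of the source $\mathbb{F}\{X\}_N$. Since the product in $\mathbb{F}\{X\}$ is commutative, a monomial $x_1^{I_1}\cdots x_n^{I_n}$ depends only on the \emph{multiset} of differential orders recorded by each $I_k$, not on the order of its entries; that is, $x_1^{I_1}\cdots x_n^{I_n}=x_1^{\pi_1 I_1}\cdots x_n^{\pi_n I_n}$ for every $\pi=(\pi_1,\ldots,\pi_n)$ with $\pi_k\in S_{N_k}$. Hence the monomials in $\mathbb{F}\{X\}_N$ are in bijection with $n$-tuples of multisets $(\bar I_1,\ldots,\bar I_n)$, where $\bar I_k$ is a multiset of $N_k$ elements of $\mathbb{Z}_{\ge 0}$. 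Fixing the weakly decreasing representative of each $I_k$ gives a canonical indexing set $\mathcal{I}_N$ for the monomial basis $\{\,x_1^{I_1}\cdots x_n^{I_n}\mid (I_1,\ldots,I_n)\in\mathcal{I}_N\,\}$ of $\mathbb{F}\{X\}_N$.

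Next I would describe a basis of the target $\widehat{\mathscr{L}_N}$. Over a field of characteristic zero the symmetrization $\sigma$ is idempotent (an averaging projection), so $\widehat{\mathscr{L}_N}=\mathrm{Im}(\sigma)$ coincides with the subspace of $\mathscr{L}_N$ fixed by every $\widehat{\pi}$. The group $S_{N_1}\times\cdots\times S_{N_n}$ acts on the monomial basis $\{\,e_1^{I_1}\otimes\cdots\otimes e_n^{I_n}\,\}$ of $\mathscr{L}_N$ by permuting exponents within each block, and two such monomials lie in the same orbit exactly when the underlying multisets of their blocks agree — i.e.\ orbits are again indexed by $\mathcal{I}_N$. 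Consequently the orbit averages $\sigma(e_1^{I_1}\otimes\cdots\otimes e_n^{I_n})$, one for each $(I_1,\ldots,I_n)\in\mathcal{I}_N$, are nonzero, have pairwise disjoint supports in the monomial basis, and therefore form a basis of $\widehat{\mathscr{L}_N}$; this is the standard fact that the invariants of a permutation representation are spanned freely by orbit averages. I would then observe that by its very definition $\mathscr{F}$ sends the monomial indexed by $(I_1,\ldots,I_n)\in\mathcal{I}_N$ to the orbit average indexed by the \emph{same} tuple, so $\mathscr{F}$ carries one basis bijectively onto the other, whence both injectivity and surjectivity follow at once (surjectivity is also immediate directly, since any $\sigma(\phi)$ is a linear combination of orbit averages by linearity of $\sigma$).

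The only genuine content — and the step I expect to be the main obstacle — is the linear independence asserted above: that distinct tuples of multisets yield linearly independent orbit averages. Everything reduces to this invariant-theoretic fact, which in turn rests on characteristic zero (so that $\sigma$ is truly a projection onto the invariant subspace) together with the elementary observation that distinct orbits occupy disjoint sets of basis monomials, ruling out cancellation between their averages. If a more quantitative argument is preferred, I would refine the decomposition by the differential degree $d$ in each block: $\mathscr{F}$ preserves this grading, each graded piece is then finite dimensional, and "disjoint supports" can be replaced by the clean dimension count in which both spaces have dimension equal to $\prod_k p_{\le N_k}(d_k)$, the product over blocks of the number of partitions of $d_k$ into at most $N_k$ parts.
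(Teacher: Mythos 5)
Your proof is correct. Note, however, that the paper contains no proof of this statement to compare against: it is imported verbatim from Shakiban \cite{Shakiban1981}, Thm.~4, as the citation in the theorem header indicates. Your argument is the standard (and, in substance, the original) one: over a field of characteristic zero, $\sigma$ is the averaging (Reynolds) projection for the permutation action of $S_{N_1}\times\cdots\times S_{N_n}$ on the monomial basis of $\mathscr{L}_N$, so $\widehat{\mathscr{L}_N}=\sigma(\mathscr{L}_N)$ is exactly the invariant subspace and is freely spanned by the orbit averages, one per orbit; commutativity of $\mathbb{F}\{X\}$ makes its monomial basis in multidegree $N$ indexed by the same $n$-tuples of multisets that index the orbits; and $\mathscr{F}$ carries the basis element of a given index to the orbit average of the same index, hence is an isomorphism. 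The two points worth stating explicitly, both of which your write-up handles, are (i) well-definedness of $\mathscr{F}$ (a monomial admits many orderings of each $I_k$; this is settled by your canonical weakly decreasing representatives together with the identity $\sigma\circ\widehat{\pi}=\sigma$), and (ii) the linear independence of orbit averages, which follows from their pairwise disjoint supports in the monomial basis. The closing dimension count by differential degree is a valid alternative finish but is not needed once the disjoint-support argument is in place.
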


If $f\in \mathbb{F}\{X\}$ and $\mathscr{F}(f)=\phi$, the notation $\widehat{E^k}(\phi)$ will denote $\mathscr{F}\!\bigl(E^k(f)\bigr)$.

\begin{lemma}[\cite{Shakiban1981}, Lemma~5(iii)]\label{lem:Shakiban}
For $\phi \in \mathscr{L}_N$ write $\phi=\phi(E_1,\ldots,E_n)$. Then \[ \widehat{E^k}(\phi) = N_k \, \phi(E_1,\ldots,E_{k-1},E_k^{\ast},E_{k+1},\ldots,E_n), \] where \[ E_k^{\ast} =(e_{k1},\ldots,e_{k,N_k-1}, -\sum_{j=1}^{N_k-1}e_{kj} -\sum_{\substack{i=1 \\ i \neq k}}^{n} \sum_{j=1}^{N_i} e_{ij}). \]
\end{lemma}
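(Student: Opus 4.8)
The plan is to reduce the statement to the action of $\mathscr{F}$ on the two elementary operators out of which $E^k$ is built, namely the total derivative $D$ and the partial derivatives $\partial/\partial x_k^{(i)}$, and then to assemble them through the defining formula $E^k=\sum_{i\ge 0}(-D)^i\,\partial/\partial x_k^{(i)}$. By linearity it suffices to treat a single monomial $f=x_1^{I_1}\cdots x_n^{I_n}$ of multidegree $N=(N_1,\ldots,N_n)$, with transform $\phi=\mathscr{F}(f)$, and to track how each operator acts on $\phi$.

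First I would determine $\widehat D$. Write $T=\sum_{k=1}^n\sum_{j=1}^{N_k}e_{kj}$ for the sum of all auxiliary variables in multidegree $N$, and let $p=\prod_{k,j}e_{kj}^{\,i_{kj}}$ be the unsymmetrized image of $f$, so $\phi=\sigma(p)$. Since $D$ raises each differential order by one in all possible ways, the Leibniz rule gives $\mathscr{F}(Df)=\sum_{k,j}\sigma(e_{kj}\,p)=\sigma(T\,p)$; and because $T$ is symmetric, $\widehat\pi(T)=T$ for every $\pi$, whence $\sigma(T\,p)=T\,\sigma(p)=T\,\phi$. Thus $\widehat D$ is multiplication by $T$. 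On the reduced multidegree $N'=(\ldots,N_k-1,\ldots)$ it is, by the same argument, multiplication by the reduced sum $T'=\sum_{j=1}^{N_k-1}e_{kj}+\sum_{i\neq k}\sum_j e_{ij}$.

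The heart of the argument is a \emph{peeling identity} for the partial derivatives. Introducing a formal bookkeeping variable $\xi$ to record the differential order of the factor being removed, I claim
\[
\sum_{i\ge 0}\mathscr{F}\!\Bigl(\tfrac{\partial f}{\partial x_k^{(i)}}\Bigr)\,\xi^{\,i}
= N_k\,\phi\bigl(E_1,\ldots,E_{k-1},(e_{k1},\ldots,e_{k,N_k-1},\xi),E_{k+1},\ldots,E_n\bigr),
\]
i.e. removing one $x_k$-factor and recording its order corresponds exactly to substituting $\xi$ for the last block-$k$ variable $e_{kN_k}$, up to the factor $N_k$. This is where the symmetrization combinatorics must be handled: in each summand of the symmetrized sum defining $\phi$ the variable $e_{kN_k}$ is carried by precisely one index, and collecting these terms shows that $N_k\,\phi|_{e_{kN_k}\to\xi}=\sum_{j}\xi^{\,i_{kj}}\,\mathscr{F}(f/x_k^{(i_{kj})})$, the factor $N_k$ arising from the ratio $(N_k-1)!/N_k!$ against the $N_k$ available choices of factor. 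Matching this with the left-hand side, which by definition of $\partial/\partial x_k^{(i)}$ equals $\sum_{j}\xi^{\,i_{kj}}\,\mathscr{F}(f/x_k^{(i_{kj})})$, is the one genuinely delicate step.

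Finally I would combine the two ingredients. Since $\partial f/\partial x_k^{(i)}$ has multidegree $N'$, applying $\widehat D$ (multiplication by $T'$ there) gives $\mathscr{F}\bigl((-D)^i\,\partial f/\partial x_k^{(i)}\bigr)=(-T')^{\,i}\,\mathscr{F}\bigl(\partial f/\partial x_k^{(i)}\bigr)$, so
\[
\widehat{E^k}(\phi)=\sum_{i\ge 0}(-T')^{\,i}\,\mathscr{F}\!\Bigl(\tfrac{\partial f}{\partial x_k^{(i)}}\Bigr).
\]
The right-hand side is obtained from the peeling identity by the substitution $\xi\mapsto -T'$, since $\sum_i c_i(-T')^{\,i}$ is the value at $\xi=-T'$ of the generating polynomial $\sum_i c_i\,\xi^{\,i}$. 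As $-T'$ is exactly the last component of $E_k^{\ast}$, this substitution turns the peeled tuple into $E_k^{\ast}$ and yields $\widehat{E^k}(\phi)=N_k\,\phi(E_1,\ldots,E_k^{\ast},\ldots,E_n)$, as claimed. I expect the peeling identity and its symmetrization bookkeeping to be the main obstacle; the passage from $D$ to multiplication by $T$ and the final $\xi\mapsto -T'$ substitution are then formal manipulations with generating functions.
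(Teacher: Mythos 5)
Your proposal is correct, but there is nothing in the paper to compare it against: the paper does not prove this lemma at all, it imports it verbatim from Shakiban's article (Lemma~5(iii) of the cited reference). Judged on its own merits, your argument is sound, and all three ingredients check out. (1) $\widehat{D}$ is multiplication by $T=\sum_{k,j}e_{kj}$: indeed $\mathscr{F}(Df)=\sum_{k,j}\sigma(e_{kj}\,p)=\sigma(Tp)=T\sigma(p)$, the last equality because $T$ is invariant under all block permutations $\widehat{\pi}$. (2) The peeling identity is correct: grouping the terms of the symmetrizing sum according to $j_0=\pi_k(N_k)$ gives, for each of the $N_k$ choices of removed factor, exactly $(N_k-1)!$ permutations, so that
\[
\phi\big|_{e_{kN_k}\to\xi}=\frac{1}{N_k}\sum_{j_0=1}^{N_k}\xi^{\,i_{kj_0}}\,\mathscr{F}\bigl(f/x_k^{(i_{kj_0})}\bigr),
\]
which matches $\sum_{i\ge0}\mathscr{F}\bigl(\partial f/\partial x_k^{(i)}\bigr)\xi^{\,i}$ once one uses $\partial f/\partial x_k^{(i)}=m_i\,f/x_k^{(i)}$ with $m_i$ the multiplicity of the value $i$ in $I_k$. (3) Since the peeling identity is an identity in the polynomial ring $\mathscr{L}_{N'}[\xi]$, the substitution $\xi\mapsto -T'$ is a legitimate ring homomorphism into $\mathscr{L}_{N'}$, and $-T'$ is precisely the last entry of $E_k^{\ast}$, so the conclusion follows. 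One small point worth making explicit in a final write-up: $\widehat{E^k}(\phi)$ is only defined for $\phi$ in the image $\widehat{\mathscr{L}_N}$ of $\mathscr{F}$, so reducing to monomials $f$ does cover the full statement. Your route --- transforming the building blocks $D$ and $\partial/\partial x_k^{(i)}$ separately and assembling them by a generating-function substitution --- is essentially the computation underlying the original Olver--Shakiban resolution papers, so you have in effect reconstructed the external proof that the authors chose to cite rather than reproduce.
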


\begin{lemma}\label{hom-Shakiban}
Let $f=f(x_1,\ldots,x_n)$ be a homogeneous polynomial in $\mathbb{F}\{X\}$ with $\deg_{x_i}(f)>0$ for all $i$. If $E^{r}(f)=0$ for some $r\in\{1,\ldots,n\}$, then $E^{s}(f)=0$ for all $s\in\{1,\ldots,n\}$.
\end{lemma}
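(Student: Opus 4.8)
The plan is to transport the problem through the generalized Gel'fand--Dikii transform $\mathscr{F}$ and to reinterpret each operator $\widehat{E^k}$ as one and the same geometric operation, namely restriction to a hyperplane that does not depend on $k$. Write $N=(N_1,\dots,N_n)$ with $N_k=\deg_{x_k}(f)$; by hypothesis $N_k\ge 1$ for every $k$. Put $\phi=\mathscr{F}(f)\in\widehat{\mathscr{L}_N}$. Since by definition $\widehat{E^k}(\phi)=\mathscr{F}(E^k(f))$ and $\mathscr{F}$ is injective by Theorem~\ref{thm:Shakiban81-iso}, the vanishing $E^k(f)=0$ is equivalent to $\widehat{E^k}(\phi)=0$. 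By Lemma~\ref{lem:Shakiban}, $\widehat{E^k}(\phi)=N_k\,\phi(E_1,\dots,E_k^{\ast},\dots,E_n)$, and since $\operatorname{char}\mathbb{F}=0$ and $N_k\ge 1$ the scalar $N_k$ is invertible; hence $E^k(f)=0$ if and only if the substituted polynomial $\Phi_k:=\phi(E_1,\dots,E_k^{\ast},\dots,E_n)$ vanishes identically.

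The key computation is to identify the substitution $E_k\mapsto E_k^{\ast}$. Introducing the total linear form $s=\sum_{i=1}^n\sum_{j=1}^{N_i}e_{ij}$, a direct check shows that the last entry of $E_k^{\ast}$ equals $-\sum_{(i,j)\ne(k,N_k)}e_{ij}$, while every other coordinate is left untouched. Thus $\Phi_k=\phi\circ L_k$, where $L_k$ is the linear endomorphism of the ambient space $V=\bigoplus_{i,j}\mathbb{F}e_{ij}$ that fixes every $e_{ij}$ with $(i,j)\ne(k,N_k)$ and sends $e_{kN_k}$ to $-\sum_{(i,j)\ne(k,N_k)}e_{ij}$. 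I would then verify two facts. First, $\operatorname{im}L_k\subseteq H$, where $H=\{s=0\}$, because $s(L_k v)=0$ for all $v$. Second, $L_k$ restricts to the identity on $H$, because on $H$ one has $e_{kN_k}=-\sum_{(i,j)\ne(k,N_k)}e_{ij}$. Together these give $\operatorname{im}L_k=H$, so $L_k$ is a surjection of $V$ onto $H$. Crucially, the hyperplane $H$ is the same for every $k$.

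From surjectivity onto $H$ one reads off the decisive equivalence: since $\mathbb{F}$ is infinite, $\Phi_k=\phi\circ L_k$ vanishes identically as a polynomial if and only if it vanishes as a function, which by $\operatorname{im}L_k=H$ happens if and only if $\phi$ vanishes on $H$. As the condition $\phi|_H=0$ makes no reference to $k$, I conclude that $\Phi_r=0$ for one index $r$ forces $\Phi_s=0$ for every $s$. Chaining the equivalences $E^s(f)=0\iff\widehat{E^s}(\phi)=0\iff\Phi_s=0\iff\phi|_H=0$ then yields the statement.

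I expect the main obstacle to be the bookkeeping in the second step: confirming that the prescribed last coordinate of $E_k^{\ast}$ really is the negative of the sum of all remaining variables, and hence that $L_k$ lands in, and restricts to the identity on, the single hyperplane $H$ uniformly in $k$. Everything else is formal once this identification is in place. The hypothesis $\deg_{x_i}(f)>0$ enters only to guarantee that each block genuinely contains the coordinate $e_{kN_k}$ being solved for (so that $E_k^{\ast}$ and $L_k$ are defined) and that the prefactor $N_k$ is nonzero.
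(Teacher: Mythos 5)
Your proof is correct, and it runs through the same pipeline as the paper's proof --- the generalized Gel'fand--Dikii transform, injectivity from Theorem~\ref{thm:Shakiban81-iso}, Lemma~\ref{lem:Shakiban}, and cancellation of the nonzero scalar $N_k$ --- but it resolves the decisive step differently, and more cleanly. The paper argues pairwise (w.l.o.g.\ $r=1$, $s=2$): it introduces the linear forms $P$ and $Q$, observes that $e_{2N_2}$ occurs only inside $P$, and asserts that $\phi(E_1^\ast,E_2,\ldots,E_n)$ and $\phi(E_1,E_2^\ast,E_3,\ldots,E_n)$ ``have the same forms with respect to their independent vectors'', i.e.\ an implicit invertible relabeling $P\leftrightarrow e_{1N_1}$ carries one substituted polynomial to the other. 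You instead identify a single $k$-independent object, the hyperplane $H=\{\sum_{i,j}e_{ij}=0\}$, and show that each substitution map surjects onto $H$ and restricts to the identity there, so that $\Phi_k\equiv 0$ if and only if $\phi|_H\equiv 0$, simultaneously for every $k$. This buys two things: all indices are treated at once rather than in pairs, and the paper's informal ``same form'' step is replaced by a precise geometric criterion, so your version is, if anything, more rigorous than the published one. One point to tidy in the write-up: you introduce $L_k$ as the endomorphism of $V=\bigoplus_{i,j}\mathbb{F}e_{ij}$ sending the \emph{basis vector} $e_{kN_k}$ to $-\sum_{(i,j)\neq(k,N_k)}e_{ij}$, but for $\Phi_k=\phi\circ L_k$ and your two checks ($s\circ L_k=0$ and $L_k|_H=\mathrm{id}$) to hold literally, $L_k$ must be the induced map on \emph{points} in the coordinates $e_{ij}$ (equivalently, the transpose of the basis-vector map); under the literal basis-vector reading, $\operatorname{im}L_k$ would be the coordinate subspace spanned by the $e_{ij}$ with $(i,j)\neq(k,N_k)$, which depends on $k$ and is not contained in $H$, and the checks would fail. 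This is a routine variables-versus-points conflation, not a gap: stated with the point-map convention, every step of your argument is valid.
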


\begin{proof}
Without loss of generality, assume $r=1$ and $s=2$. Let $\mathscr{F}(f)=\phi=\phi(E_1,\ldots,E_n)$, where $E_k=(e_{k1},\ldots,e_{kN_k})$. By Lemma~\ref{lem:Shakiban},
$$
\widehat{E^1}(\phi)=N_1\,\phi(E_1^\ast,E_2,\ldots,E_n),
\qquad
\widehat{E^2}(\phi)=N_2\,\phi(E_1,E_2^\ast,E_3,\ldots,E_n),
$$
with
$$
E_1^\ast=\Bigl(e_{11},\ldots,e_{1,N_1-1},-\sum_{j=1}^{N_1-1}e_{1j}-\sum_{j=1}^{N_2}e_{2j}-\sum_{i=3}^{n}\sum_{j=1}^{N_i}e_{ij}\Bigr),
$$
$$
E_2^\ast=\Bigl(e_{21},\ldots,e_{2,N_2-1},-\sum_{j=1}^{N_1}e_{1j}-\sum_{j=1}^{N_2-1}e_{2j}-\sum_{i=3}^{n}\sum_{j=1}^{N_i}e_{ij}\Bigr).
$$
Since $\deg_{x_i}(f)>0$ for all $i$, we have $N_1,N_2\ge1$.

Set
$$
P=-\sum_{j=1}^{N_1-1}e_{1j}-\sum_{j=1}^{N_2}e_{2j}-\sum_{i=3}^{n}\sum_{j=1}^{N_i}e_{ij},
\qquad
Q=-\sum_{j=1}^{N_1-1}e_{1j}-\sum_{j=1}^{N_2-1}e_{2j}-\sum_{i=3}^{n}\sum_{j=1}^{N_i}e_{ij}.
$$
Then $P=Q-e_{2N_2}$, and we can rewrite
$$
\phi(E_1^\ast,E_2,\ldots,E_n)=\phi\!\bigl((e_{11},\ldots,e_{1,N_1-1},\,P),\ (e_{21},\ldots,e_{2,N_2-1},\,Q-P),\,E_3,\ldots,E_n\bigr),
$$
$$
\phi(E_1,E_2^\ast,E_3,\ldots,E_n)=\phi\!\bigl((e_{11},\ldots,e_{1,N_1-1},\,e_{1N_1}),\ (e_{21},\ldots,e_{2,N_2-1},\,Q-e_{1N_1}),\,E_3,\ldots,E_n\bigr).
$$

We note from the expressions above that the vector $e_{2N_2}$ occurs only in the linear combination defining $P$. Hence, by the definition of the vectors $e_{ij}$, the vectors $e_{11},\ldots,e_{1N_1-1}$ and $P$ are linearly independent. Furthermore,  the expressions $\phi(E_1^\ast,E_2,\ldots,E_n)$ and $\phi(E_1, E_2^\ast,E_3,\ldots,E_n)$ have the same forms with respect to their independent vectors.   Therefore, we conclude that  $\phi(E_1, E_2^\ast,E_3,\ldots,E_n)=0$ whenever $\phi(E_1^\ast,E_2,\ldots,E_n)=0$
and consequently $\widehat{E^1}(\phi)=0$ implies $\widehat{E^2}(\phi)=0$. This proves the claim.
\end{proof}

Next theorem shows that if a homogeneous polynomial in $Nov\langle X\rangle$ with $\deg_{x_i}(f)>0$ for all $i$, symmetric elements can be characterized in terms of Euler operators.

\begin{theorem}[symmetric element criterion]\label{criterion}
Suppose that $f=f(x_1,\ldots,x_n)$ is a homogeneous polynomial in $Nov\langle X\rangle$ with $\deg_{x_i}(f)>0$ for all $i$. Then the following are equivalent:
\begin{enumerate}
\item $f$ is a symmetric element with $\deg(f)>1$;
\item $f$ is a \emph{null Lagrangian}, i.e., $E^k(f)=0$ for all $k\in\{1,\ldots,n\}$;
\item $E^k(f)=0$ for some $k\in\{1,\ldots,n\}$.
\end{enumerate}
\end{theorem}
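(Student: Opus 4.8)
The plan is to establish the three equivalences through the implications $(1)\Rightarrow(2)$, $(2)\Leftrightarrow(3)$, and $(2)\Rightarrow(1)$, combining the basis description of $S\langle X\rangle$ (the theorem asserting that $\overline{\mathcal{M}}=X\cup\{u'\mid u\in\mathcal{M},\ \deg(u)-d(u)=2\}$ is a basis of $S\langle X\rangle$) with Shakiban's exactness theorem (Theorem~\ref{Shakiban1981}) and the homogeneity transfer of Lemma~\ref{hom-Shakiban}. Only $(2)\Rightarrow(1)$ carries real content; the others are short.

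For $(1)\Rightarrow(2)$: if $f$ is symmetric with $\deg(f)>1$, the basis of $S\langle X\rangle$ lets us write $f=\sum_\alpha c_\alpha u_\alpha'$ with $u_\alpha\in\mathcal{M}$ (the variables $x_i$ cannot occur, since $\deg(f)>1$), so $f=D(g)$ with $g=\sum_\alpha c_\alpha u_\alpha$. Because the sequence in Theorem~\ref{Shakiban1981} is a complex, $\operatorname{im}D\subseteq\ker E$, i.e. $E^k\circ D=0$ for each $k$; hence $E^k(f)=0$ for every $k$ and $f$ is a null Lagrangian. The implication $(2)\Rightarrow(3)$ is trivial, and $(3)\Rightarrow(2)$ is precisely Lemma~\ref{hom-Shakiban}, whose hypotheses (homogeneity of $f$ and $\deg_{x_i}(f)>0$ for all $i$) are exactly those assumed here; this gives $(2)\Leftrightarrow(3)$.

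For $(2)\Rightarrow(1)$, Shakiban's theorem yields some $g\in\mathbb{F}\{X\}$ with $f=D(g)$, but with no control on $g$; the work is to recover the precise form of $g$ forced by $f\in Nov\langle X\rangle$. Let $(\lambda_1,\dots,\lambda_n)$ be the multidegree of $f$ and $m=\deg(f)=\sum_i\lambda_i$. Since $f\in Nov\langle X\rangle$, Theorem~\ref{Th1} says every monomial of $f$ satisfies $\deg-d=1$; together with $\deg=m$ this forces $d=m-1$ on every monomial, so $f$ is bihomogeneous of standard degree $m$ and differential degree $m-1$. As $D$ preserves the multidegree and raises the differential degree by exactly one, it is bihomogeneous of bidegree $(0,+1)$. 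Extracting from $f=D(g)$ the bicomponent $\tilde g$ of $g$ of multidegree $(\lambda_1,\dots,\lambda_n)$ and differential degree $m-2$, bihomogeneity gives $D(\tilde g)=f$, since $\tilde g$ is the unique bicomponent of $g$ whose image lands in the bidegree of $f$. Every monomial $w$ of $\tilde g$ then has $\deg(w)=m$ and $d(w)=m-2$, i.e. $\deg(w)-d(w)=2$, so $f=D(\tilde g)=\sum w'\in\operatorname{span}_{\mathbb{F}}\overline{\mathcal{M}}=S\langle X\rangle$; thus $f$ is symmetric. Finally $\deg(f)>1$: for $f\neq0$ the monomials of $\tilde g$ satisfy $d(w)=m-2\ge0$, hence $m\ge2$.

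The main obstacle is $(2)\Rightarrow(1)$: converting the bare existence statement ``$f=D(g)$ for some $g$'' into membership in $S\langle X\rangle$. The crux is that Theorem~\ref{Shakiban1981} supplies no information about $g$, so one must use the Novikov constraint (via Theorem~\ref{Th1}) to pin $f$ to a single differential degree, and then exploit the bigraded behaviour of $D$ to isolate a bicomponent $\tilde g$ with $\deg-d=2$ on every monomial --- exactly the condition making $D(\tilde g)$ a basis-spanned symmetric element. Everything else is assembly of the already-available Theorem~\ref{Shakiban1981} and Lemma~\ref{hom-Shakiban}.
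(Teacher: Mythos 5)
Your proposal is correct and takes essentially the same route as the paper: $(1)\Rightarrow(2)$ via the basis of $S\langle X\rangle$ together with exactness (Theorem~\ref{Shakiban1981}), $(2)\Leftrightarrow(3)$ via Lemma~\ref{hom-Shakiban}, and the converse direction via exactness plus the two basis theorems. Your treatment of $(2)\Rightarrow(1)$ — using Theorem~\ref{Th1} to pin $f$ to a single bidegree, then extracting the bihomogeneous component $\tilde g$ of $g$ whose monomials satisfy $\deg-d=2$ — merely spells out, correctly, a step that the paper compresses into a single sentence.
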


\begin{proof}
(1) $\Rightarrow$ (2): If $f$ is symmetric, then by Theorem~\ref{basis} there exists $g\in \mathbb{F}\{X\}$ with $f=D(g)$. By Theorem~\ref{Shakiban1981} (exactness of $0\to \mathbb{F}\{X\}\xrightarrow{D} \mathbb{F}\{X\}\xrightarrow{E} \mathbb{F}\{X\}$), we have $\ker E=\operatorname{im} D$, hence $E(f)=0$, i.e., $E^k(f)=0$ for all $k$.

(2) $\Rightarrow$ (3) is immediate.

(3) $\Rightarrow$ (1): Assume $f$ is homogeneous with $\deg_{x_i}(f)>0$ for all $i$ and $E^k(f)=0$ for some $k$. By Lemma~\ref{hom-Shakiban}, this implies $E^j(f)=0$ for all $j$. By Theorem~\ref{Shakiban1981}, there exists $g\in \mathbb{F}\{X\}$ with $f=D(g)$. Finally, Theorem~\ref{basis} implies that $f$ is symmetric.
\end{proof}

\begin{example} Let $f(x_1,x_2)=2x_1''x_1x_2+2x_1'^{2}x_2-3x_1^2x_2''-4x_1'x_1x_2'\in Nov(\{x_1,x_2\})$. Then $f(x_1,x_2)$ is homogeneous with $\deg_{x_1}(f)=2,\deg_{x_2}(f)=1$. Now we apply the Euler operators $E^1$ and $E^2$ to $f(x_1,x_2)$:
$$E^1(f)=2x_1''x_2+2(x_1x_2)''-4(x_1'x_2)'-6x_1x_2''-4x_1'x_2'+4(x_1x_2')'=0,$$
$$E^2(f)=2x_1''x_1+2x_1'^{2}-(3x_1^2)''+(4x_1'x_1)'=0.$$   
Note that $$f(x_1,x_2)=(2x_1'x_1x_2-3x_1^2x_2')'=\frac{5}{2}(x_1\circ x_1)\circ x_2-3(x_1\circ x_2)\circ x_1.$$
\end{example}

\section{A homomorphic image of $S\langle X\rangle$}
In this section, we show that the class of algebras embeddable into Novikov algebras with respect to the symmetrized product does not form a variety. We say that an algebra $B$ is embeddable into a Novikov algebra $A$ with respect to the symmetrized product if $B$ is isomorphic to a subalgebra of $A^{(+)}$, where $A^{(+)}$ denotes the symmetrization of $A$ with multiplication
$a\circ b=a\cdot b+b\cdot a$.

Assume that $X=\{x\}$. For convenience, put

$$x_i=D^i(x),\qquad i\geq 0.$$
\begin{prp}\label{non-speicality}
Let $\alpha$ be the ideal of $S\langle X\rangle$ generated by $D(x_0^3x_2)$. Then the quotient $S\langle X\rangle/\alpha$ cannot be embedded into the symmetrization of any Novikov algebra. 
\end{prp}

\begin{proof} We first show that 
$$S\langle X\rangle\cap \{\alpha\}
\not\subseteq \alpha$$
where $\{\alpha\}$ denotes the ideal of the free Novikov algebra $Nov\langle X\rangle$ generated by the set $\alpha$.

Let
$$
f=D(x_0^3x_2)
=x_0^3x_3+3x_0^2x_1x_2.
$$
Since
$$
\deg(x_0^3x_2)-d(x_0^3x_2)=4-2=2,
$$
this implies  that $f\in S\langle X\rangle$.
Equivalently, $f$ can be written explicitly in terms of the
symmetrized product as
$$
f=\frac12\left(
\bigl((x\circ x)\circ x\bigr)\circ x-
(x\circ x)\circ(x\circ x)
\right).
$$
Define
\begin{equation}\label{counter-element}
h=(f\cdot x)\cdot x+7x\cdot (f\cdot x)-7(x\cdot f)\cdot x-4x\cdot (x\cdot f)\in\{\alpha\}.
\end{equation}
Using the differential realization \(a\cdot b=a'b\), a direct calculation
gives
$$(f\cdot x)\cdot x=
x_0^5x_5
+10x_0^4x_1x_4
+12x_0^4x_2x_3
+24x_0^3x_1^2x_3
+21x_0^3x_1x_2^2
+12x_0^2x_1^3x_2,$$
$$
x\cdot(f\cdot x)=
x_0^4x_1x_4
+6x_0^3x_1^2x_3
+3x_0^3x_1x_2^2
+6x_0^2x_1^3x_2,
$$
$$
(x\cdot f)\cdot x=
x_0^4x_1x_4
+x_0^4x_2x_3
+6x_0^3x_1^2x_3+6x_0^3x_1x_2^2
+6x_0^2x_1^3x_2,$$
and
$$
x\cdot(x\cdot f)
=
x_0^3x_1^2x_3
+3x_0^2x_1^3x_2.$$
Substituting these expressions into \eqref{counter-element}, we obtain
\begin{equation}\label{counter-element-2}
 h=x_0^5x_5+10x_0^4x_1x_4+5x_0^4x_2x_3+20x_0^3x_1^2x_3.   
\end{equation}
On the other hand,
\begin{equation}\label{counter-element-3}h=
D\left(
x_0^5x_4+5x_0^4x_1x_3
\right).
\end{equation}
Indeed,
$$
D(x_0^5x_4)
=
x_0^5x_5+5x_0^4x_1x_4
\quad \mbox{and}\quad 
D(5x_0^4x_1x_3)
=
20x_0^3x_1^2x_3
+5x_0^4x_2x_3
+5x_0^4x_1x_4.
$$
Moreover,
$$\deg(x_0^5x_4)-d(x_0^5x_4)=6-4=2, \quad \mbox{and} \quad
\deg(x_0^4x_1x_3)-d(x_0^4x_1x_3)
=6-(1+3)=2.$$
Hence, by the basis theorem for symmetric elements, $h\in S\langle X\rangle$.
Thus $$h\in S\langle X\rangle\cap \{\alpha\}.$$
It remains to prove that
$$
h\notin\alpha.$$

The algebra $S\langle X\rangle$ is positively graded, and $f$ is homogeneous of
degree \(4\). Hence the degree-six component of the ideal generated by
$f$ is contained in the linear span of
$$
g_1=(f\circ x)\circ x \quad \mbox{and} \quad 
g_2=f\circ(x\circ x).$$

A direct calculation gives
\begin{equation}\label{expansion-1}
g_1=x_0^5x_5+12x_0^4x_1x_4+13x_0^4x_2x_3+37x_0^3x_1^2x_3+30x_0^3x_1x_2^2+27x_0^2x_1^3x_2,
\end{equation}
whereas
\begin{equation}\label{expansion-2}
g_2=2x_0^4x_1x_4+2x_0^4x_2x_3
+14x_0^3x_1^2x_3+12x_0^3x_1x_2^2
+18x_0^2x_1^3x_2.
\end{equation}

Suppose that $h\in\alpha$. Since $h$ has degree 6, there exist
scalars $\lambda,\mu$ such that
\[
h=\lambda g_1+\mu g_2.
\]
Comparing the coefficient of $x_0^5x_5$ in \eqref{counter-element-2}, \eqref{expansion-1}, and
\eqref{expansion-2}, we obtain $\lambda=1$. 
Comparing the coefficient of $x_0^4x_1x_4$, we get  $\mu=-1$.
However, with these values, the coefficient of $x_0^4x_2x_3$ in
$\lambda g_1+\mu g_2=g_1-g_2$ is 11,
whereas the corresponding coefficient in $h$ is $5$. This is a
contradiction. Hence
$$h\notin\alpha.$$
We have therefore constructed an element satisfying
$$
h\in S\langle X\rangle\cap \{\alpha\}
\qquad\text{and}\qquad
h\notin\alpha.
$$
Consequently,
$$
S\langle X\rangle\cap \{\alpha\}
\not\subseteq\alpha.$$ So, by Cohn’s criterion \cite[Theorem 2.2]{Cohn1954} the factor-algebra $
S\langle X\rangle/\alpha$ can not be embedded into the free  Novikov algebra $Nov\langle X\rangle$ with respect to the symmetrized product.

\end{proof}

\begin{corollary}\label{not variety}
The class of algebras embeddable into  Novikov algebras with respect to symmetrized product $a\circ b=a\cdot b+b\cdot a$ does not form a variety.
\end{corollary}

\begin{proof}
Let $\mathcal{V}$ be the class of all algebras embeddable into symmetrizations of Novikov algebras. By Proposition \ref{non-speicality}, the class $\mathcal{V}$ is not closed under homomorphic images. Therefore, $\mathcal{V}$ does not form a variety of algebras.
\end{proof}

\section{\label{2}\ $S_n$-module structures}

In this section, we study the $S_n$-module structure of the space $Sym_{n}$. We refer the reader to \cite{Dzhumadil'daevIsmailov2014, Sagan} for the required definitions and notations.

We consider $F_{n}$ as an $S_{n}$-module with the natural action
\[
\sigma\cdot X(x_1,\ldots,x_{n})
= X\bigl(x_{\sigma(1)},\ldots,x_{\sigma(n)}\bigr).
\]
Let $\alpha$ be a partition of $n$, and let $P(n)$ denote the set of partitions of $n$.
For $\alpha=(n^{i_n},\ldots,2^{i_2},1^{i_1})\in P(n)$, define the partition $w(\alpha)\in P(n+2)$ by
\[
w(\alpha)=sort\!\left(n+2-\sum_{j=1}^{n} i_j,\; i_1,\; i_2,\; \ldots,\; i_n\right).
\]
Here $sort()$ means reordering the entries into a non-increasing sequence. It is easy to see that $Sym_1$ and $Sym_2$ are isomorphic to the trivial $S_n$-modules. The following theorem completely describes the $S_{n+2}$-module structure of $Sym_{n+2}$ for $n\geq 1$.

\begin{theorem}
Let $Sym_{n+2}$ be the multilinear component of degree $n+2$ of $S\langle X\rangle$. 

{\bf a.} As an $S_{n+2}$-module, $Sym_{n+2}$ is isomorphic to a direct sum of modules induced from the trivial module of the Young subgroups $S_{w(\alpha)}$:
$$Sym_{n+2}\cong \bigoplus_{\alpha\vdash n} Ind_{S_{w(\alpha)}}^{S_{n+2}}(\bf1).$$

{\bf b.} Moreover, each induced module decomposes into a direct sum of irreducible modules, and $$Sym_{n+2}\cong\bigoplus_{\beta \vdash n+2}(\sum_{\alpha\vdash n} K_{\beta w(\alpha)})S^{\beta},$$
where $S^{\beta}$ is the Specht module corresponding to the partition $\beta$, and
$K_{\beta\,w(\alpha)}$ is the Kostka number counting semistandard Young tableaux of shape $\beta$
and content $w(\alpha)$.

{\bf c.} An irreducible $S_{n+2}$-module is called {\it admissible} if it appears in the decomposition of $Sym_{n+2}$. Then $\beta=(\beta_{1},\ldots,\beta_{k})\vdash n+2$ is admissible if and only if $$\beta_{1}-2\geq\beta_{3}+2\beta_{4}+\ldots+(k-2)\beta_{k}.$$

\end{theorem}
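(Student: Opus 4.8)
The plan is to read the module structure straight off the monomial basis of $Sym_{n+2}$ supplied by the preceding Corollary, and then translate the three claims into the combinatorics of Young permutation modules and the dominance order. For \textbf{(a)}, a basis of $Sym_{n+2}$ is $\{(x_1^{(j_1)}\cdots x_{n+2}^{(j_{n+2})})' : j_i\ge 0,\ j_1+\cdots+j_{n+2}=n\}$, and I would identify each basis vector with its tuple of differential orders $(j_1,\dots,j_{n+2})$, a weak composition of $n$ into $n+2$ parts. The action $\sigma\cdot X(x_1,\dots,x_{n+2})=X(x_{\sigma(1)},\dots,x_{\sigma(n+2)})$ then merely permutes the entries of these tuples, so $Sym_{n+2}$ is a permutation module and splits as the sum of the spans of the $S_{n+2}$-orbits. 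Two tuples lie in one orbit iff they have the same multiset of entries; the multiset of \emph{nonzero} entries is a partition $\alpha\vdash n$, so the orbits are indexed by $\alpha\vdash n$. The stabilizer of a tuple is the Young subgroup permuting positions with equal entries, whose block sizes are the multiplicities $i_1,\dots,i_n$ of the parts of $\alpha$ together with the number $n+2-\sum_j i_j$ of zero entries — exactly the parts of $w(\alpha)$. Hence each orbit span is $\mathrm{Ind}_{S_{w(\alpha)}}^{S_{n+2}}(\mathbf 1)$, giving (a). For \textbf{(b)}, each summand is the Young permutation module $M^{w(\alpha)}$, so I would apply Young's rule $M^{\mu}\cong\bigoplus_{\beta}K_{\beta\mu}S^{\beta}$ (see \cite{Sagan}) to each one and sum multiplicities over $\alpha$, producing the coefficient $\sum_{\alpha\vdash n}K_{\beta\,w(\alpha)}$ of $S^{\beta}$.

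For \textbf{(c)}, by (b) the partition $\beta$ is admissible iff $K_{\beta\,w(\alpha)}>0$ for some $\alpha\vdash n$, which by the standard positivity criterion holds iff $\beta\trianglerighteq w(\alpha)$ in dominance order for some $\alpha$. Setting $n(\beta)=\sum_i(i-1)\beta_i$, a one-line rearrangement gives $\sum_{i\ge3}(i-2)\beta_i=n(\beta)-(n+2)+\beta_1$, so the displayed inequality is equivalent to $n(\beta)\le n$, and I would phrase the criterion in this form. For the direction ``admissible $\Rightarrow n(\beta)\le n$'' I would use that $n(\cdot)$ is order-reversing for dominance (a short Abel-summation computation), so $\beta\trianglerighteq w(\alpha)$ forces $n(\beta)\le n(w(\alpha))$, together with the bound $n(w(\alpha))\le n$ valid for every $\alpha\vdash n$. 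The latter I would prove from $n(\lambda)=\sum_{i<j}\min(\lambda_i,\lambda_j)$: if $\alpha$ has distinct parts $a_1>\cdots>a_d\ge 1$ with multiplicities $m_1,\dots,m_d$ and $z=n+2-\sum_r m_r$, the parts of $w(\alpha)$ are $z,m_1,\dots,m_d$, so
\[
n(w(\alpha))=\sum_{r<s}\min(m_r,m_s)+\sum_r\min(z,m_r)\le\sum_r(d-r+1)m_r\le\sum_r a_r m_r=n,
\]
using $\min(m_r,m_s)\le m_r$, $\min(z,m_r)\le m_r$, and $a_r\ge d-r+1$.

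For the converse I must, given $\beta\vdash n+2$ with $n(\beta)\le n$, exhibit $\alpha\vdash n$ with $w(\alpha)\trianglelefteq\beta$. The natural attempt is to realize $\beta$ itself as $w(\alpha)$: take the multiplicities of $\alpha$ to be $\beta_2,\dots,\beta_k$ (which forces $z=\beta_1$, hence $w(\alpha)=\beta$) and attach distinct positive integer part-values $a_1>\cdots>a_{k-1}$; the minimum of $\sum_r a_r m_r$ over such assignments is exactly $n(\beta)\le n$. \textbf{The main obstacle is this exact-degree matching}: one must raise the weighted sum to the value $n$ precisely while keeping the $a_r$ distinct, yet the attainable sums are $n(\beta)$ plus the numerical semigroup generated by $\{\beta_2,\dots,\beta_k\}$, so the gap $n-n(\beta)$ need not be attainable when all of $\beta_2,\dots,\beta_k$ exceed $1$. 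I would circumvent this by aiming at a strictly smaller target: if $n(\beta)<n$ and $\beta_k\ge2$, replace $\beta$ by $\gamma=(\beta_1,\dots,\beta_{k-1},\beta_k-1,1)$, which satisfies $\gamma\trianglelefteq\beta$, $n(\gamma)=n(\beta)+1\le n$, and has a part equal to $1$; this part $1$ makes the semigroup all of $\mathbb Z_{\ge0}$, so $w(\alpha)=\gamma$ is realizable by some $\alpha\vdash n$, whence $\beta\trianglerighteq\gamma=w(\alpha)$ gives admissibility. Combined with the necessity direction, this establishes (c).
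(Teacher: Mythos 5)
Your proof is correct, and for parts (a) and (b) it is essentially the paper's own argument: the paper fixes $\alpha\vdash n$ and takes $F_\alpha$ to be the span of the basis elements whose differential orders form the multiset of parts of $\alpha$ together with two zeros; these $F_\alpha$ are precisely your orbit spans, identified with the permutation module $M^{w(\alpha)}\cong \mathrm{Ind}_{S_{w(\alpha)}}^{S_{n+2}}(\mathbf{1})$, after which Young's rule gives (b), exactly as you do. The genuine divergence is part (c): the paper gives no argument at all there, saying only that one should repeat the argument of Section~8 of \cite{Dzhumadil'daevIsmailov2014}, whereas you supply a complete, self-contained proof. Your reduction (admissible iff $K_{\beta\,w(\alpha)}>0$ for some $\alpha\vdash n$, iff $\beta\trianglerighteq w(\alpha)$ for some $\alpha$), the reformulation of the displayed inequality as $n(\beta)\le n$ with $n(\beta)=\sum_i(i-1)\beta_i$, the necessity direction via the dominance-reversing property of $n(\cdot)$ combined with the bound $n(w(\alpha))\le n$, and the sufficiency direction via explicit construction all check out; in particular the $\gamma$-trick (splitting off a part equal to $1$, which can then absorb the entire gap $n-n(\gamma)$ by enlarging the single part of $\alpha$ of multiplicity one) correctly resolves the exact-degree matching obstacle, and the verifications $\gamma\trianglelefteq\beta$, $n(\gamma)=n(\beta)+1\le n$ are immediate. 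One harmless inaccuracy: the attainable sums $\sum_r a_r m_r$ are not literally $n(\beta)$ plus the numerical semigroup generated by $\beta_2,\ldots,\beta_k$ --- for multiplicities $\{3,2\}$ the minimum is $7$, the value $8$ is attainable although $1\notin\langle 2,3\rangle$, while $10\in 7+\langle 2,3\rangle$ is not attainable --- but your argument only uses that the minimum equals $n(\beta)$, that a zero gap is trivially attainable, and that any gap is attainable once some multiplicity equals $1$; all three facts are correct, so the case analysis (gap zero; $\beta_k=1$; otherwise the $\gamma$-trick) is airtight. Thus you have effectively filled in a proof the paper outsources to a reference.
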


\begin{proof} Let $\alpha=(\alpha_1,\ldots,\alpha_n)\vdash n$ be a partition of $n$, and let $F_{\alpha}$ be the subspace of $Sym_{n+2}$ spanned by the basis elements of the form
\[
\bigl(x_{j_1}^{(\alpha_1)}\cdots x_{j_n}^{(\alpha_n)}x_{j_{n+1}}x_{j_{n+2}}\bigr)'.
\]
Then $F_{\alpha}$ is an $S_{n+2}$-submodule of $Sym_{n+2}$ for every $\alpha\vdash n$.
Write $\alpha$ in the form $\alpha=(n^{i_n},\ldots,2^{i_2},1^{i_1})$.
Then the number of variables occurring in elements of $F_{\alpha}$ with differential degree $0$ equals
\[
n+2-\sum_{j=1}^{n} i_j.
\]
Furthermore, the symmetric group acts trivially on the variables of each fixed differential degree $j$.
Therefore, upon restricting the action, $F_{\alpha}$ is isomorphic (as an $S_{n+2}$-module) to the permutation module
$M^{w(\alpha)}$ corresponding to $w(\alpha)$ (see \cite{Dzhumadil'daevIsmailov2014} or \cite[Def.~2.1.5]{Sagan}).

Recall that, in general, the permutation module $M^\alpha$ is isomorphic to the induced module $Ind_{S_\alpha}^{S_n}(\mathbf{1})$, where $\mathbf{1}$ denotes the trivial representation of the Young subgroup $S_\alpha\subset S_n$. It follows that
\[
F_\alpha \cong Ind_{S_{w(\alpha)}}^{S_{n+2}}(\mathbf{1}).
\]
Therefore, we obtain the following isomorphism of $S_{n+2}$-modules:
\[
Sym_{n+2} \cong \bigoplus_{\alpha \vdash n} F_\alpha
\cong \bigoplus_{\alpha \vdash n} Ind_{S_{w(\alpha)}}^{S_{n+2}}(\mathbf{1}).
\]
By Young's rule,
\[
Ind_{S_{w(\alpha)}}^{S_{n+2}}(\mathbf{1})
=\bigoplus_{\beta \unrhd w(\alpha)} K_{\beta\, w(\alpha)}\, S^\beta .
\]
Consequently,
\[
Sym_{n+2} \cong \bigoplus_{\alpha \vdash n} \ \bigoplus_{\beta \unrhd w(\alpha)}
K_{\beta\, w(\alpha)}\, S^\beta
=
\bigoplus_{\beta \vdash n+2}\left(\sum_{\alpha \vdash n} K_{\beta\, w(\alpha)}\right) S^\beta .
\]

To prove part {\bf (c)}, we repeat the argument from Section~8 of \cite{Dzhumadil'daevIsmailov2014} for $\beta\vdash n+2$.

\end{proof}

\section*{Acknowledgments}

The authors are supported by grant AP26199089 from the Ministry of Science and Higher Education of the Republic of Kazakhstan.  We thank Vladimir Dotsenko for his kind interest in our results and for drawing our attention to Molev’s result \cite{Molev1989}. The second author is grateful to Aibol Orazgaliyev for valuable discussions.

\end{document}